\def\N{{\mathbb{N}}}
\def\Z{{\mathbb{Z}}}
\newtheorem{example}{Example}
\numberwithin{example}{subsection}
\numberwithin{eg}{subsubsection}
\newtheorem{corollary}{Corollary}
\newtheorem{theorem}{Theorem}
\newtheorem{lemma}{Lemma}
\title{Pregroups and hyperbolicity}
\author{Jiayue Li}
\address{Department of Mathematical Sciences, Stevens Institute of Technology, 1 Castle Point on Hudson, Hoboken, NJ 07030, USA} 
\email{jli211@stevens.edu}
\author{Denis Serbin}
\address{Department of Mathematical Sciences, Stevens Institute of Technology, 1 Castle Point on Hudson, Hoboken, NJ 07030, USA} 
\email{d.e.serbin@gmail.com}
\keywords{Pregroup, hyperbolic group, combination theorem}
\subjclass[2010]{20F65, 20F67, 20E06}
\date{}
\begin{document}

\begin{abstract}
In this paper we study hyperbolicty of the universal group $U(P)$ of a pregroup $P$. Given a finitely generated group $G$ and a pregroup $P$ such that $G \simeq U(P)$, we provide a particular set of axioms on $P$ which ensure that $G$ is word-hyperbolic.
\end{abstract}

\maketitle

\tableofcontents

\section{Introduction}
\label{sec:intro}

The idea of representing group elements by certain normal forms dates back the papers of B. L. van der Waerden \cite{VanDerWaerden:1948} and R. Baer \cite{Baer:1949, Baer:1950a,Baer:1950b}, where the authors studied properties of normal forms of elements in free products and free products with amalgamation. Then R. Lyndon, in his paper \cite{Lyndon:1963}, introduced groups equipped with abstract length functions as a 
tool to carry over Nielsen cancellation theory from free groups and free products, to a much more general setting. Eventually, another attempt to find axioms formalizing normal forms on groups was made by J. Stallings in the paper \cite{Stallings:1971}, where he tried to generalize the notion of amalgamated free product. He introduced {\em pregroup} as a set with partially defined multiplication, which is associative whenever possible. Such an algebraic structure has a multiplicative identity together with the multiplicative inverse for every element, so that certain axioms are satisfied. Now, every pregroup $P$ is associated with its {\em universal group} $U(P)$, which is in some sense the largest group generated by $P$. Elements of $U(P)$ can be represented as finite reduced products over the alphabet $P$, which is a generalization of reduced normal forms for elements of free product with amalgamation.

\smallskip

Since every representation of $g \in U(P)$ as a finite reduced product $p_1 \cdots p_k$ over $P$ has the same number of factors, the function $l: U(P) \to \Z$ given by $l(g) = k$ is a well-defined length function on $U(P)$. It was studied by I. Chiswell in \cite{Chiswell:1987} and A. H. M. Hoare in \cite{Hoare:1988} (see also \cite{Hurley:1978, Nesayef:1983}). In particular, it was proved that the function $l$ is a {\em Lyndon length function} if the underlying pregroup $P$ satisfies an extra axiom (see \cite{Chiswell:1987, Hoare:1988} for details). It is known that existence of a Lyndon length function on a group is equivalent to an action of the group on a simplicial tree (see \cite{Chiswell:1976}), which can be studied using Bass-Serre theory (see \cite{Serre:1980}). Connections between pregroups and Bass-Serre theory were thoroughly investigated by F. Rimlinger in \cite{Rimlinger:1987a}, where the author came up with another condition on a pregroup $P$ under which the universal group $U(P)$ has a non-trivial action on a simplicial tree. At about the same time attempts to generalize Stallings' pregroups were made by H. Kushner and S. Lipschutz in \cite{Kushner_Lipschutz:1988}, where they defined {\em pree} as a set with a partial operation satisfying axioms that are weaker than pregroup axioms, but still strong enough to prove that a pree embeds into its universal group.

\smallskip

Connections between pregroups and Lyndon length functions on their universal groups were exploited in \cite{Myasnikov_Remeslennikov_Serbin:2005, KMRS:2012, KMS:2016}, where the authors studied groups of infinite non-Archimedean words. Another application of pregroup techniques was demonstrated in \cite{Kvaschuk_Myasnikov_Serbin:2009}, where groups with so-called {\em big powers condition} were studied. 

\smallskip

Pregroups provide a convenient algebraic tool, which makes it possible to conduct various kinds of combinatorial arguments based on cancellation between normal forms representing elements of the universal group. As was shown in \cite{Chiswell:1987} and \cite{Kvaschuk_Myasnikov_Serbin:2009}, properties of pregroups defined by certain extra axioms translate into properties of their universal groups: existence of a splitting and the big power property of $U(P)$ can be encoded by axioms on $P$. It is interesting to find other group properties of $U(P)$ that can be defined by axioms on $P$. It is also interesting to connect pregroup techniques with methods of geometric group theory: extra axioms on $P$ may have impact on geometric propeties of the Cayley graph of $U(P)$, and it is natural to ask what geometric properties of $U(P)$ can be encoded by axioms on $P$. 

In this paper we study connections between properties of pregroup structures on a finitely generated group $G$ and hyperbolicity of $G$. We show that if $P$ satisfies certain axioms (see Section \ref{subsec:hyp-universal} for details), then $G \simeq U(P)$ is word-hyperbolic. More precisely, we prove the following result

{\bf Theorem.}\ {\em Le $G$ be a group with a finite generating set $S$ and suppose $G$ has a pregroup structure with respect to a pregroup $P$. If $P$ satisfies the axioms (P6), (H0) - (H3), then there exists $\delta > 0$ such that every geodesic triangle in $\Gamma(G, S)$ is $\delta$-thin. In particular, $G \simeq U(P)$ is word-hyperbolic.}

\section{Preliminaries}
\label{sec:preliminaries}

In this section we recall the definition of a {\em pregroup} and its {\em universal group}, and formulate the main goal of the paper.

\subsection{Pregroups and their universal groups}
\label{subsec:pregroups}

In \cite{Stallings:1971} J. Stallings introduced a notion of a pregroup $P$ and its universal group $U(P)$. A pregroup $P$ provides a very economic way to describe reduced forms of elements of $U(P)$.

A {\em pregroup} $P$ is a set $P$, with a distinguished element $1$, equipped with  a partial multiplication, that is a function $D \rightarrow P$, $(x,y) \rightarrow x y$, where $D \subset P \times P$, and an inversion, that is, a function $P \rightarrow P$, $x \rightarrow x^{-1}$, satisfying the following axioms (below $x y$ is {\em defined} if $(x, y) \in D$):

\begin{enumerate}
\item[(P1)] For all $u \in P,~$ $u 1$ and $1 u$ are defined and $u 1 = 1 u = u$.

\item[(P2)] For all $u \in P,~$ $u^{-1} u$ and $u u^{-1}$ are defined and $\ \ u^{-1} u = u u^{-1} = 1$.

\item[(P3)] For all $u, v \in P$, if $u v$ is defined, then so is $v^{-1} u^{-1}$, and $(u v)^{-1} = v^{-1} u^{-1}$.

\item[(P4)] For all $u, v, w \in P$, if $u v$ and $v w$ are defined, then $(u v) w$ is defined if and only if $u (v w)$ is defined, in which case
$$(u v) w =  u (v w).$$

\item[(P5)] For all $u,v,w,z \in P$, if $u v, v w$, and $w z$ are all defined then either $u v w$, or $v w z$ is defined.
\end{enumerate}

In fact (see \cite{Hoare:1988}), (P3) follows from (P1), (P2), and (P4), hence, it can be omitted.

To describe the universal group $U(P)$ recall that a mapping $\phi: P \rightarrow Q$ of pregroups is a {\em morphism} if for any $x, y \in P$ whenever $x y$ is defined in $P$, the product $\phi(x) \phi(y)$ is defined in $Q$ and it is equal to $\phi(x y)$.

Now the group $U(P)$ can be characterized by the following universal property: there is a morphism of pregroups $\lambda: P \rightarrow U(P)$, such that, for any morphism  $\phi: P \rightarrow G$ of $P$ into a group $G$, there is a unique group homomorphism $\psi: U(P) \rightarrow G$ for which $\psi \lambda  = \phi$. This shows that $U(P)$ is a group with a generating set $P$ and a set of relations $x y = z$, where $x, y \in P$, $x y$ is defined in $P$, and equals $z$.

There exists an explicit construction of $U(P)$ due to Stallings \cite{Stallings:1971}. A finite sequence $u_1, \ldots, u_n$ of elements from $P$ is termed a {\em $P$-product} and it is denoted by $u_1 \cdots u_n$ (one may view it as a finite word in the alphabet $P$). A $P$-product $u_1 \cdots u_n$ is called {\em reduced} if for every $i \in [1, n-1]$ the product $u_i u_{i+1}$ is not defined in $P$. In the case when $u_i u_{i+1}$ is defined in $P$ and equals $v \in P$, one can {\em reduce} $u_1 \cdots u_n$ by replacing the pair $u_i u_{i+1}$ by $v$.  Let ``$\sim$'' be the equivalence relation on the set of all reduced $P$-products defined as follows: $u_1 \cdots u_n \sim v_1 \cdots v_m$ if and only if $m = n$ and there exist elements $a_1, \ldots, a_{n-1} \in P$ such that the product $a_{i-1}^{-1} u_i a_i$  defined and $v_i = a_{i-1}^{-1} u_i a_i$ for every $i \in [1, n]$ (here $a_0 = a_n = 1$). Then the group $U(P)$ can be described as the set $U(P)$ of equivalence classes (with respect to $\sim$) of reduced $P$-products, where multiplication is given by concatenation of representatives and consecutive reduction of the resulting product. Obviously, $P$ embeds into $U(P)$ via the canonical map $u \rightarrow u$.

Below we give some natural examples of pregroups and their universal groups.

\begin{example}
Any group $G$ is a pregroup $P$, where $P = G$ and $D = G \times G$.
\end{example}

\begin{example}
For any set $X$ let $X^{-1}$ be the set of formal inverses. Define $P = X \cup X^{-1} \cup \{1\}$ and $x y$ is defined in $P$ only if either $x = 1$, or $y = 1$, or $x = y^{-1}$. Hence, $P$ is a pregroup and $U(P) \simeq F(X)$.
\end{example}

\begin{example}
For any groups $A, B$ such that $A \cap B = C$, define $P = A \cup B$ and $x y$ is defined in $P$ only if either $x, y \in A$, or $x,y \in B$. Hence, $P$ is a pregroup and $U(P) \simeq A \ast_C B$.
\end{example}

\begin{example}
Any HNN-extension $G = \langle H, t \mid t^{-1} A t = \phi(A) \rangle$ is isomorphic to $U(P)$, where 
$$P = \{ h_1 t^\varepsilon h_2 \mid h_1, h_2 \in H,\ \varepsilon \in \{-1,0,1\} \}$$
and $D$ consists of pairs $(x,y) \in P \times P$ such that the product $x y$ can be reduced in $G$ back to an element from $P$.
\end{example}

\subsection{Properties of the universal group defined by pregroup properties}
\label{subsec:extra-axioms}

In this section we recall what properties of the universal group $U(P)$ of a pregroup $P$ are defined by properties of $P$. 

\subsubsection{Splittings of the universal group}
\label{subsubsec:split}

Given a group $G$, we say that {\em $G$ splits (into an amalgamated free product or an HNN extension)} if either $G \simeq A \ast_C B$ for some groups $A,\ B$ and $C \neq 1$, or $G \simeq H \ast_C$ for some $H$ and $C \neq 1$.

Let $P$ be a pregroup. It turns out (see \cite{Rimlinger:1987a, Chiswell:1987}) that existence of a splitting of the universal group $U(P)$ of $P$ depends on certain properties of $P$.

Following \cite{Stallings:1971}, for every $x \in P$ define 
$$L(x) = \{a \in P \mid (a, x) \in D\}.$$ 
Next, we write $x \leqslant y$ if and only if $L(y) \subseteq L(x)$ and define a relation on $P$: $x \sim y$ if $L(x) = L(y)$. If $x \leqslant y$ and $x \not\sim y$, then we write $x < y$.

``$\sim$'' is an equivalence relation on $P$ and $P/ \sim$ is a partially ordered set (the order is induced from $P$). $P/ \sim$ is called the {\em order tree of $P$}. $P/ \sim$ is of {\em finite height} if for every $x \in P$ there are only finitely many $x_0, \ldots, x_k \in P$ such that $1 = x_0 < \cdots < x_{n-1} < x_k = x$. 

$P/ \sim$ of finite height can be turned into a simplicial tree $T_P$, where each equivalence class $[x]$ represents a vertex $v_x$. A pair $(v_x, v_y)$ defines a positively oriented edge if $x < y$ and for any $x \leqslant z \leqslant y$, it follows that either $z \sim x$, or $z \sim y$.

\begin{theorem}\cite{Rimlinger:1987a, Hoare:1988}
\label{th:split-1}
If $P/ \sim$ is of finite height, then it is possible to define a non-trivial action of $U(P)$ on some $\Z$-tree, that is, $U(P)$ can be represented as the fundamental group of a graph of groups.
\end{theorem}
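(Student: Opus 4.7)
The plan is to invoke Bass-Serre theory by exhibiting $U(P)$ as the fundamental group of an explicit graph of groups $\mathcal{G}$ built from the order tree $T_P$. The Bass-Serre tree associated with $\mathcal{G}$ is a simplicial tree (hence a $\Z$-tree), and the natural action of $\pi_1(\mathcal{G}) \simeq U(P)$ on it is non-trivial as long as $T_P$ has more than one vertex, i.e., whenever $P$ is not already a group in disguise.

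First I would define $\mathcal{G}$. Its underlying graph is read off from $P/{\sim}$: vertices correspond to equivalence classes $[x]$, and edges are the cover relations used already to define $T_P$. For each vertex $v_x$ the vertex group $G_{v_x}$ is taken to be the subgroup of $U(P)$ generated by the down-set $\{p \in P : p \leqslant x\}$. For each positively oriented edge from $v_x$ to $v_y$ (so $x < y$ with no intermediate class up to $\sim$) the edge group is $G_{v_x}$ with the evident inclusion into $G_{v_y}$. Finite height of $P/{\sim}$ is exactly what makes this recursive construction terminate and produces a bona fide graph of groups.

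Next, using axioms (P1)--(P5), I would check that the tautological map $P \to \pi_1(\mathcal{G})$ sending $p \in [x]$ to its image in $G_{v_x}$ is a pregroup morphism: whenever $p q$ is defined in $P$, the definition of $\leqslant$ combined with (P4) forces $p$ and $q$ to lie in a common down-set, so their product already lives in a common vertex group and is well-defined in $\pi_1(\mathcal{G})$. The universal property of $U(P)$ then produces a homomorphism $\Phi : U(P) \to \pi_1(\mathcal{G})$, and surjectivity is automatic since $P$ generates $\pi_1(\mathcal{G})$.

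The main obstacle is proving that $\Phi$ is injective; my strategy is to match the two normal-form theorems. Given a reduced $P$-product $u_1 \cdots u_n$ with $n \geqslant 1$, I would show by induction on $n$ that $\Phi(u_1) \cdots \Phi(u_n)$ is reduced in the graph-of-groups sense, so it is non-trivial in $\pi_1(\mathcal{G})$ by the Bass-Serre normal form theorem. The crucial ingredient is axiom (P5), which translates the non-definedness of $u_i u_{i+1}$ in $P$ into the genuine distinctness of the consecutive edges traversed at the pivot vertex in the Bass-Serre tree; without (P5) one could have spurious coincidences that spoil reducedness. Once injectivity is established, $U(P) \simeq \pi_1(\mathcal{G})$ acts on the Bass-Serre tree of $\mathcal{G}$, and this action has no global fixed vertex as soon as $T_P$ has at least one edge, which is precisely the non-degenerate case of the hypothesis.
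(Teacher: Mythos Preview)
The paper does not prove this statement at all: it is quoted from \cite{Rimlinger:1987a, Hoare:1988} as background, with no argument supplied beyond recording the associated Lyndon length function $l(x_1\cdots x_n)=\sum_i d(x_{i-1}^{-1},x_i)$ immediately after the statement. So there is no in-paper proof to compare your attempt against; what follows concerns your outline on its own merits.

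Your plan is in the spirit of Rimlinger's construction, but the injectivity step is a genuine gap rather than a detail. Saying that (P5) ``translates the non-definedness of $u_iu_{i+1}$ into the genuine distinctness of the consecutive edges'' is precisely the content of the theorem, and it does not follow in one line. With your edge groups chosen as the \emph{entire} lower vertex group $G_{v_x}$, reducedness in the Bass--Serre sense at a pivot vertex $v$ demands that the relevant syllable not lie in $G_{v_x}$ for the incoming edge; unwinding this, you must show that $u_i$ lies strictly above the meet of $[u_{i-1}^{-1}]$ and $[u_{i+1}]$ in $T_P$. That statement is not a consequence of $u_{i-1}u_i,\,u_iu_{i+1}\notin D$ and (P5) alone; it requires the forest structure of $P/{\sim}$ together with an induction on height, which is exactly where the finite-height hypothesis does real work in Rimlinger's memoir. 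Your sketch also never specifies how a reduced $P$-product is parsed into a path in $T_P$ (which vertex each $u_i$ is assigned to, and which edges are traversed between them); this bookkeeping is most of the labour.

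If you want a shorter self-contained route, note that the formula the paper displays right after the theorem is Hoare's approach: one checks directly that $l$ is a $\Z$-valued Lyndon length function using finite height, and then invokes Chiswell's equivalence between such length functions and actions on $\Z$-trees. This bypasses the explicit graph of groups entirely and is closer to what the paper is gesturing at.
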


The Lyndon length function associated with the action above is defined as follows. For every $w = x_1 \cdots x_n \in U(P)$ we have
$$l(x_1 \cdots x_n) = \sum_{i=1}^{n+1} d(x^{-1}_{i-1}, x_i),$$
where $x_0 = x_{n+1} = 1$ and $d(x,y)$ is the distance between $[x]$ and $[y]$ in $T_P$.

According to the Theorem \ref{th:split-1}, the universal group of a pregroup of finite height has a splitting. It turns out (see \cite{Chiswell:1987}) that existence of a splitting of $U(P)$ can be encoded by an axiom on $P$.

Following \cite{Chiswell:1987}, for a pregroup $P$ define
$$B_P = \{b \in P \mid z b\ {\rm and}\ b z\ {\rm are\ defined\ for\ all}\ z \in P\}.$$
Obviously, $B_P$ is a subgroup of $P$. Furthermore, if a reduced $P$-product contains an element from $B_P$, then it consists of a single element.

\begin{example}
Let $G = G_1 \ast_C G_2$. Then $G \simeq U(P)$, where $P = G_1 \cup G_2$ and $(x, y) \in D$ only if either $x, y \in G_1$ or $x, y \in G_2$. In this case $B_P = C$.
\end{example}

Note that if a $P$-product $u = u_1 \cdots u_n$ is reduced, then the function $l: U(P) \to \Z$ that sends $u$ to $n$ is a well-defined function on $U(P)$. We write $|u| = n$ if $l(u) = n$.

It was shown in \cite{Chiswell:1987} that the function $l: U(P) \to \Z$ is a Lyndon length function if and only if $P$ satisfies an additional axiom (P6):

\begin{enumerate} \label{pr:P6}
\item[(P6)] For any $x,y \in P$, if $x y$ is not defined, but $x a$ and $a^{-1} y$ are both defined for some $a \in P$, then $a \in B_P$.
\end{enumerate}

Namely, the following result is known.

\begin{theorem}\cite{Chiswell:1987}
\label{th:split-2}
Let $P$ be a pregroup and $B_P$ defined as above. Then the function $l : U(P) \to \Z$ defined below for any reduced $P$-product $u_1 u_2 \cdots u_n$
\[l(u_1 u_2 \cdots u_n) = \left\{\begin{array}{ll}
\mbox{$n$,} & \mbox{if $n > 1$} \\
\mbox{$1$,} & \mbox{if $n = 1$ and $u_1 \notin B_P$} \\
\mbox{$0$,} & \mbox{if $n = 1$ and $u_1 \in B_P$}
\end{array}
\right.
\]
is a $\Z$-valued Lyndon length function if and only if the pregroup $P$ satisfies the axiom (P6).
\end{theorem}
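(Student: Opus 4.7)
My plan is to prove the two implications separately, relying on the explicit reduction procedure for $P$-products and the structural role of $B_P$.

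For the forward direction (assume $P$ satisfies (P6), show $l$ is a Lyndon length function), I would first check that $l$ is well-defined: the equivalence relation $\sim$ preserves the number of factors by its very definition, and for a single-factor reduced product the only admissible adjustment has $a_0 = a_1 = 1$, so membership in $B_P$ is unambiguous. Next I would dispatch the easy Lyndon axioms. The identity has length $0$ because $1 \in B_P$ by (P1). Symmetry $l(g) = l(g^{-1})$ follows from reversing a reduced representative $u_1 \cdots u_n$ and invoking (P3) to see that $u_n^{-1} \cdots u_1^{-1}$ is again reduced, together with the fact that $B_P$ is closed under inversion. The subadditivity $l(gh) \le l(g) + l(h)$ is immediate, since concatenation of reduced representatives followed by consecutive in-pregroup reductions can only decrease the number of factors.

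The substantive Lyndon axiom is the ultrametric-type inequality $c(g,h) \ge \min\{c(g,k), c(k,h)\}$ for the Gromov product $c(g,h) = \frac{1}{2}(l(g) + l(h) - l(g^{-1}h))$. To handle this, I would fix reduced representatives of $g$, $h$, $k$ and analyze cancellation in the three concatenations. The relation $\sim$ permits sideways shifts of an element $a \in P$ from one factor to its neighbor whenever the resulting products remain defined, and axiom (P6) is precisely the statement that any such shift which would enable further cancellation must pass through an element of $B_P$; but such an element is absorbable into a neighboring factor without changing the value of $l$. Consequently the amount of cancellation between two reduced forms is a well-defined invariant modulo transparent $B_P$ adjustments, and the ultrametric inequality reduces to the standard combinatorial statement that the pairwise meeting depths of three reduced forms satisfy the required inequality.

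For the reverse direction, I would argue by contrapositive. Assume (P6) fails through witnesses $x, y \in P$ and $a \notin B_P$ with $xy$ undefined but $xa$ and $a^{-1} y$ both defined. Since $a \notin B_P$, by the definition of $B_P$ there is some $z \in P$ for which $z a$ or $a z$ is undefined. The plan is to use $z$ together with $x$, $y$, $a$ to build a triple $(g, h, k)$ in $U(P)$ of small length such that the cancellation between the reduced form of $g^{-1}$ and the reduced forms of $h$ or $k$ admits two inconsistent depths: the ``direct'' one forced by $xy$ being undefined, and the ``shifted'' one obtained by inserting $a a^{-1}$ at the relevant boundary. These inconsistent depths produce values of $c(\cdot,\cdot)$ that violate the ultrametric inequality (and in particular refuse to be $\mathbb{Z}$-valued in a consistent way), contradicting the assumption that $l$ is a Lyndon length function.

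The main obstacle will be the bookkeeping in the forward direction. Reduction of a concatenated $P$-product can cascade, so one must track that each cancellation step either exposes a new reducible pair at the boundary or produces an element of $B_P$ that is transparent with respect to $\sim$. Axiom (P5) controls interactions inside a window of three consecutive factors, while (P6) is what rules out a hidden cancellation mediated by a non-central intermediate. Keeping both axioms in play simultaneously while comparing three reduced forms, and showing that the three pairwise cancellation depths combine in the predicted ultrametric manner, is the core difficulty.
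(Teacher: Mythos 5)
The paper does not actually prove this statement: it is quoted from Chiswell's paper \cite{Chiswell:1987} as a known result, so there is no internal proof to compare against and your proposal has to stand on its own. As it stands, it is a strategy outline rather than a proof, and the two places where the real content lies are exactly the places left open. In the forward direction, the only substantive Lyndon axiom is the inequality $c(g,h) \geqslant \min\{c(g,k),c(k,h)\}$, and your text defers precisely this (``the core difficulty''): the claim that (P6) makes the cancellation depth between two reduced forms a well-defined invariant ``modulo transparent $B_P$ adjustments,'' and that the three pairwise depths then combine correctly, is the entire theorem, and nothing in the proposal shows how the cascade of reductions in a concatenation $u_1\cdots u_m\, v_1\cdots v_n$ is controlled by (P6) (in particular that at most one coalesced syllable survives at the interface and how that affects $l(g^{-1}h)$ by $0$ or $1$). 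Until that bookkeeping is actually carried out, the forward implication is not established.

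The reverse direction can be made completely explicit, and your sketch both overcomplicates it and leans on a wrong criterion. If (P6) fails, take witnesses $x,y,a$ with $xy$ undefined, $xa$ and $a^{-1}y$ defined, $a\notin B_P$. Then $x,y\notin B_P$ (an element of $B_P$ multiplies with everything), and $xa,a^{-1}y\notin B_P$ as well, since otherwise, e.g.\ $xa=b\in B_P$ would give $xy=b(a^{-1}y)\in P$, contradicting the normal form theorem (valid under (P1)--(P5)) because $x\cdot y$ is a reduced product of length $2$. Hence $l(x)=l(y)=l(a)=l(xa)=l(a^{-1}y)=1$ while $l(xy)=2$, so $c(x^{-1},y)=0$ but $c(x^{-1},a)=c(a,y)=\tfrac12$, violating the ultrametric axiom with the triple $(x^{-1},y,a)$; no auxiliary element $z$ witnessing $a\notin B_P$ is needed beyond securing $l(a)=1$. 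Note also that your parenthetical appeal to the Gromov products ``refusing to be $\Z$-valued'' cannot be the violated axiom: even when (P6) holds (e.g.\ $P=A\cup B$ for an amalgam), $c$ routinely takes half-integer values, so integrality of $c$ is not part of what ``$\Z$-valued Lyndon length function'' asserts here, and any proof of the reverse direction must target the inequality itself, as above.
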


It is also known (see \cite[Theorem 2.7]{Nesayef:1983}) that the axiom (P6) is equivalent to the following one:

\begin{enumerate} \label{pr:P6'}
\item[(P6')] For any $x, y \in P$, if $x y$ is not defined and $(a x) y$ is defined for some $a \in P$, then $a x \in B_P$.
\end{enumerate}

It is easy to see that if $P$ satisfies (P6), then in a reduced $P$-product $v_1 \cdots v_n$ obtained from another reduced $P$-product $u_1 \cdots u_n$ by interleaving $v_i = a_{i-1}^{-1} u_i a_i$ for $i \in [1,n]$, where $a_0 = a_n = 1$, the elements $a_1, \ldots, a_{n-1}$ belong to $B_P$.

\subsubsection{The big powers condition}
\label{subsubsec:bp}

Let $G$ be a group. The {\em big powers (or BP for short) condition} holds in $G$ if for every sequence of elements $u_1, \ldots, u_k \in G$ such that $[u_i, u_{i+1}] \neq 1$ for $i \in [1,k-1]$, there exists $n \in \N$ such that $u_1^{\alpha_1} \cdots u_k^{\alpha_k} \neq 1$ for any $\alpha_1, \ldots, \alpha_k \geqslant n$. In this case we call $G$ a {\em BP-group}.

A particular form of the BP condition was explicitly introduced by G. Baumslag in \cite{Baumslag:1962} and then A. Ol'shanskii in \cite{Ol'shanskii:1993} defined the most general form of the condition.

Below we give some examples of classes of BP-groups.

\begin{enumerate}
\item Torsion-free abelian groups and torsion-free hyperbolic groups have BP.
\item Subgroups of BP-groups are also BP-groups.
\item Groups discriminated by BP-groups are BP-groups.
\end{enumerate}

Note also that the BP-property is not preserved by direct products.

In \cite{Kvaschuk_Myasnikov_Serbin:2009} the following question was asked: What axioms should be imposed on the pregroup $P$ to guarantee the BP property in $U(P)$? In order to formulate the axioms sufficient for the BP condition, let us recall the {\em isolation} and {\em strong isolation} subgroup properties. A subgroup $A$ of a group $G$ is called {\em isolated} if for any $g \in G,\ k \in \Z, \ k \neq 0$: $\ g^k \in A$ implies $g \in A$. Similarly, a subgroup $A$ of a group $G$ is called {\em strongly isolated} if for every sequence $u_1, \ldots, u_k \in G$, in which $[u_i, u_{i+1}] \neq 1,\ i \in [1, k-1]$ and not all $u_i$ belong to $A$, there exists $n \in \N$ such that $u_1^{\alpha_1} \cdots u_k^{\alpha_k} \notin A$ for any $\alpha_1, \ldots, \alpha_k \geqslant n$. 
Here is a list of some basic facts related to these properties.

\begin{enumerate}
\item If $A$ is strongly isolated in $G$, then $G$ is a BP-group if and only if $A$ is.
\item Any centralizer of a BP-group is strongly isolated.
\item A free factor of a BP-group is strongly isolated.
\item Let $G$ be a torsion-free hyperbolic group and $A$ a quasi-convex isolated subgroup of $G$. Then $A$ is strongly isolated.
\end{enumerate}

Now, given a pregroup $P$, we define the following axioms:

\begin{enumerate}
\item[(A1)] {(\em isolation of $B_P$)} For any $p \in P$, if $p^n$ is defined for some $n \in \Z$ and $p^n \in B_P$, then $p \in B_P$.

\item[(A2)] {\em (strong isolation of $B_P$)} For any sequence $p_1, \ldots, p_n \in P$, where $[p_i, p_{i+1}] \neq 1,\ i \in [1,k-1]$ and not all $p_i$ belong to $B_P$, there exists $m \in \N$ such that $p_1^{\alpha_1} \cdots p_n^{\alpha_n} \notin B_P$ for all $\alpha_i \geqslant m$.

\item[(A3)] {\em ($B_P$ is torsion-free)} For any $b \in B_P,\ n \in \N$, if $b^n = 1$, then either $b = 1$, or $n = 0$.

\item[(A4)] {\em (malnormality of $B_P$)} Either $B_P = 1$, or for any $p \in P - B_P,\ a \in B_P$, the product $p^{-1} a p\ $ either is not defined, or it is defined and does not belong to $B_P$.

\item[(A5)] For any $p \in P,\ a \in B_P$, the product $p^{-1} a p$ is defined.
\end{enumerate}

The axioms listed above are enough to guarantee the big powers condition on the universal group $U(P)$ as shown in the theorem below.

\begin{theorem}\cite{Kvaschuk_Myasnikov_Serbin:2009}
Let $P$ be a pregroup which satisfies the axioms (P6), (A1) - (A5) and let $U(P)$ be a CSA-group. Then $U(P)$ is a BP-group if and only if $B_P$ is.
\end{theorem}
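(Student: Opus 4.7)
The ``only if'' direction is immediate: $B_P$ embeds in $U(P)$, and subgroups of BP-groups are BP-groups (item~(2) of the list preceding the axioms). All the work lies in the converse. So suppose $B_P$ is a BP-group, take $u_1,\dots,u_k \in U(P)$ with $[u_i,u_{i+1}] \neq 1$ for $i \in [1,k-1]$, and try to produce $N \in \N$ such that $u_1^{\alpha_1}\cdots u_k^{\alpha_k} \neq 1$ whenever every $\alpha_i \geq N$.

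My first step is to bring each $u_i$ into a normal form well-adapted to the pregroup structure. Using axiom (P6) and the length function of Theorem~\ref{th:split-2}, I would cyclically reduce each $u_i$ by conjugation: $u_i = g_i^{-1} c_i g_i$ with $c_i$ cyclically reduced. Writing $h_i = g_i g_{i+1}^{-1}$, the product becomes
\[
u_1^{\alpha_1}\cdots u_k^{\alpha_k} \;=\; g_1^{-1}\,c_1^{\alpha_1} h_1 c_2^{\alpha_2} h_2 \cdots h_{k-1} c_k^{\alpha_k}\,g_k,
\]
so triviality of the left side is equivalent to the middle word equalling the fixed element $g_1 g_k^{-1}$. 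I would then classify each $c_i$ as either \emph{hyperbolic} ($|c_i|\geq 2$, or $c_i\in P\setminus B_P$ whose powers leave $P$) or \emph{elliptic} ($c_i \in P$ with all powers in $P$), with the elliptic case further split according to whether $c_i \in B_P$ or $c_i \in P\setminus B_P$.

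In the purely hyperbolic case, a standard Nielsen-style estimate applies: cyclic reduction of $c_i$ together with (P6) bounds the cancellation between $c_i^{\alpha_i}$ and $c_{i+1}^{\alpha_{i+1}}$ across $h_i$ by a constant depending only on $|c_i|$, $|c_{i+1}|$ and $|h_i|$, so the reduced length of the middle word grows linearly in $\sum_i \alpha_i$ and exceeds $|g_1 g_k^{-1}|$ for large $N$. For an elliptic factor with $c_i \in B_P$, the power $c_i^{\alpha_i}$ remains in $B_P$, and axioms (A4), (A5) let me push such factors through neighbouring hyperbolic blocks by conjugation; runs of consecutive elliptic $B_P$-factors fuse into a single element of $B_P$, where the BP hypothesis on $B_P$ together with strong isolation (A2) prevents accidental collapse to the prescribed bounded element. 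For an elliptic factor with $c_i \in P \setminus B_P$, axiom (A1) guarantees no power of $c_i$ lies in $B_P$, and the CSA hypothesis on $U(P)$ together with $[u_i,u_{i+1}]\neq 1$ forces a non-trivial contribution to the length. Assembling these bounds produces a single $N$ uniform in the $\alpha_i$.

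\textbf{Main obstacle.} The delicate step is the interleaved case, where hyperbolic and elliptic factors alternate. One must track bounded cancellation through repeated conjugation by the $h_i$ and show that the $B_P$-type pieces cannot conspire across intervening hyperbolic blocks to cancel the hyperbolic contribution. This is exactly where the combination of (A2)--(A5) with the CSA property is crucial: CSA ensures that two hyperbolic cyclically reduced elements with a sufficiently long common overlap must commute, contradicting $[u_i,u_{i+1}]\neq 1$, while strong isolation guarantees that any accumulated $B_P$-contribution either escapes $B_P$ (and is then visible in the length estimate) or stays inside $B_P$ and is controlled by the BP hypothesis there.
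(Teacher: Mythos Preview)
The paper does not contain a proof of this theorem: it is quoted verbatim from \cite{Kvaschuk_Myasnikov_Serbin:2009} as background, with no argument given. Consequently there is no ``paper's own proof'' to compare your proposal against.

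As for the proposal itself, the overall architecture---cyclic reduction of each $u_i$, rewriting the product as $g_1^{-1}c_1^{\alpha_1}h_1\cdots h_{k-1}c_k^{\alpha_k}g_k$, separating factors into hyperbolic and elliptic types, and then bounding cancellation---is the natural strategy and is indeed how the argument in \cite{Kvaschuk_Myasnikov_Serbin:2009} is organised. Your identification of the interleaved hyperbolic/elliptic case as the crux is accurate. What remains at the level of a sketch is the precise mechanism by which (A2) and the CSA hypothesis combine: you assert that ``CSA ensures that two hyperbolic cyclically reduced elements with a sufficiently long common overlap must commute,'' but CSA is a statement about malnormality of centralizers, not directly about overlaps of reduced forms, so the translation from cancellation in $P$-normal forms to a commutation statement in $U(P)$ needs to be made explicit. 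Likewise, the use of (A2) as stated applies to sequences of elements of $P$, whereas your accumulated $B_P$-contributions arise from elements of $U(P)$; bridging that gap is part of the actual work in the cited paper. These are not errors so much as places where the sketch would need to be filled in before it constitutes a proof.
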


The theorem can be applied to amalgamated free products, where the amalgamated subgroup is strongly isolated and malnormal in both factors.

\begin{corollary}\cite{Kvaschuk_Myasnikov_Serbin:2009}
Let $G = G_1 \ast_C G_2$, where $G_1, G_2$ are torsion-free CSA-groups, $C$ is strongly isolated and malnormal in both $G_1$ and $G_2$. Then $G$ is a BP-group if and only if $C$ is.
\end{corollary}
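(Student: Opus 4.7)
The plan is to realize $G = G_1 \ast_C G_2$ as the universal group $U(P)$ of a concretely defined pregroup $P$, then verify that $P$ satisfies (P6), (A1)--(A5) and that $U(P)$ is CSA, so that the preceding theorem applies.

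Following Example 3, I set $P = G_1 \cup G_2$ with partial multiplication $xy$ defined precisely when $x$ and $y$ lie in a common factor $G_i$; then $U(P) \simeq G$. A direct check gives $B_P = C$, since $bz$ and $zb$ are defined for every $z \in P$ if and only if $b \in G_1 \cap G_2$. For (P6), if $xy$ is undefined then up to symmetry $x \in G_1 \ssm C$ and $y \in G_2 \ssm C$, so any $a$ witnessing $xa$ and $a^{-1}y$ defined must lie in both factors, hence in $C = B_P$. Axiom (A3) is immediate from the torsion-freeness of the $G_i$; (A5) holds because $p^{-1}ap \in G_i$ whenever $p \in G_i$ and $a \in C \subseteq G_i$; and (A4) is exactly the malnormality of $C$ in each $G_i$.

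Axiom (A1) is recovered from the strong isolation of $C$ in each $G_i$ applied to a one-term sequence: if $p \in G_i \ssm C$ then $p^\alpha \notin C$ for all sufficiently large $\alpha$, and therefore $p^n \notin C$ for every $n \neq 0$, since $p^n \in C$ would force $p^{jn} \in C$ for all $j$, contradicting the previous statement for large $j$. Axiom (A2) is the key technical point: given $p_1, \ldots, p_k \in G_1 \cup G_2$ with consecutive non-commuting terms and not all in $C$, I would first use (A1) to fix $m$ so that each $p_i^{\alpha_i}$ with $\alpha_i \geq m$ lies outside $C$. Grouping the sequence into maximal same-factor runs, strong isolation of $C$ in the appropriate $G_i$ controls each run whose entries are not all in $C$, while runs contained entirely in $C$ collapse into a single $C$-element that gets absorbed by a neighboring non-$C$ factor element and remains outside $C$ by isolation. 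The end result is either a reduced $P$-product of length $\geq 2$, which cannot represent any single element of $P$ and hence not any element of $C$, or else a product that sits inside one factor, to which strong isolation applies directly.

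To invoke the preceding theorem I still need $U(P) = G$ to be CSA. Here I rely on the standard fact that an amalgamated product of torsion-free CSA groups over a subgroup which is malnormal in both factors is itself CSA; a Bass--Serre analysis of the abelian subgroups of $G_1 \ast_C G_2$ together with the malnormality of $C$ rules out nontrivial intersections of distinct conjugates of a maximal abelian subgroup. With (P6), (A1)--(A5) and CSA all in place, the preceding theorem yields the claimed equivalence. The main obstacle is the case analysis for (A2) when the sequence alternates between factors: one must track cancellations carefully to ensure that no run of $C$-elements recombines with its neighbors to land back inside $C$, and it is precisely here that the joint use of malnormality and strong isolation becomes essential.
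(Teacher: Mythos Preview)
Your approach is correct and matches what the paper indicates: the corollary is stated without proof (it is quoted from \cite{Kvaschuk_Myasnikov_Serbin:2009}), and the surrounding text says only that ``the theorem can be applied to amalgamated free products, where the amalgamated subgroup is strongly isolated and malnormal in both factors.'' Your proposal carries this out explicitly by taking $P = G_1 \cup G_2$, identifying $B_P = C$, and verifying (P6), (A1)--(A5) together with the CSA property of $U(P)$, which is exactly the intended derivation; your caveat that the delicate step is the verification of (A2) is accurate, but the sketch you give is along the right lines.
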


\subsection{Hyperbolicity of the universal group}
\label{subsec:hyp-universal}

Recall that a finitely generated group $G$ is called {\em (word) hyperbolic} if its Cayley graph $\Gamma(G, S)$ with respect to some finite generating set $S$ is a {\em $\delta$-hyperbolic space}, that is, it satisfies the following property: there exists a constant $\delta > 0$, called a {\em constant of hyperbolicity} of $G$, such that every geodesic triangle $\Delta(x, y, z)$ with vertices $x, y, z \in \Gamma(G, S)$ is $\delta$-thin meaning that each side of $\Delta(x, y, z)$ lies inside the union of $\delta$-neighborhoods of the other two sides. If $S$ is fixed and such constant $\delta$ exists then we also call $G$ {\em $\delta$-hyperbolic}.

$\delta$-hyperbolicity of $\Gamma(G, S)$ (and $G$ itself) can also be formulated in terms of the {\em Gromov product}. Recall that given $x, y, v \in \Gamma(G, S)$, the {\em Gromov product} of $x$ and $y$ with respect to $v$ is defined as
$$(x \cdot y)_v = \frac{1}{2}(d(x, v) + d(y, v) - d(x, y)).$$
Now $\Gamma(G, S)$ is a $\delta$-hyperbolic space (that is, $G$ is a hyperbolic group) if and only if for all $x, y, z, v \in \Gamma(G, S)$
$$(x \cdot y)_v \geqslant \min\{(x \cdot z)_v, (z \cdot y)_v\} - 2\delta.$$

Hyperbolic groups were introduced by Gromov in \cite{Gromov:1987} and now it is a well-studied class of groups (we refer the reader to \cite{Gromov:1987}, \cite{ABCFLMSS:1990}, \cite{Ghys_delaHarpe:1991}, \cite{Bridson_Haefliger:1999} for basic facts about hyperbolic groups).

In this section we introduce several axioms on a pregroup and formulate the main result of the paper: if a pregroup structure on a finitely generated group $G$ satisfied the introduced axioms, then $G$ is a hyperbolic group.

\bigskip

Let $G$ be a group with a finite generating set $S$ and let $G \simeq U(P)$ for some pregroup $P$. In this case we say that $G$ has a {\em pregroup structure with respect to $P$}. The Cayley graph $\Gamma(G, S)$ with respect to $S$ is a metric space with the word metric $d$. If $g \in G$, then we set $|g| = d(1, g)$ in $\Gamma(G, S)$.

We can view $P$ as a subset of $G$ and $(x, y) \in D \subseteq P \times P$ if and only if $x y \in P$.

Note that the structure of $G$ is known in the case when $P$ is finite.

\begin{theorem}\cite{Rimlinger:1987a}
\label{th:fin_pregorpoup}
If $P$ is a finite pregroup, then $U(P)$ is isomorphic to the fundamental group of a finite graph of finite groups.
\end{theorem}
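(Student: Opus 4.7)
The plan is to derive the result directly from Theorem \ref{th:split-1} together with Bass-Serre theory, using finiteness of $P$ to bound every piece of the resulting graph-of-groups decomposition.

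First I would observe that when $P$ is finite, the order tree $P/\sim$ has only finitely many equivalence classes and is, in particular, of finite height. Theorem \ref{th:split-1} then provides a non-trivial action of $U(P)$ on a $\Z$-tree $\widetilde{T}$, and the Bass-Serre correspondence realizes $U(P)$ as the fundamental group of the graph of groups with underlying graph $\widetilde{T}/U(P)$ and vertex/edge groups equal to the corresponding stabilizers. Rimlinger's construction identifies this quotient graph with $T_P$, which is finite since $P/\sim$ is. This takes care of finiteness of the underlying graph.

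Next I would argue that each vertex stabilizer is finite. By construction of the action, an element $g \in U(P)$ fixing a vertex $v_x$ corresponding to $[x] \in P/\sim$ is represented by a reduced $P$-product of length at most $1$ (i.e.\ its length function value is $0$), so it comes from a single element of $P$; moreover, that element must lie in the $\sim$-equivalence class $[x]$. Hence the stabilizer of $v_x$ is contained in $\{y \in P : y \sim x\} \subset P$, which is finite. Edge stabilizers, being intersections of adjacent vertex stabilizers, are finite as well.

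The main obstacle, as I see it, is the precise identification of vertex stabilizers with subsets of $P$: while intuitively clear from the embedding $\lambda \colon P \to U(P)$, it relies on a careful unpacking of Rimlinger's construction of the $\Z$-tree action and on the fact that a reduced $P$-product genuinely realizes the tree-length of its $U(P)$-element. Once these identifications are in place, the assembly of a finite graph of finite groups is routine, yielding the desired decomposition of $U(P)$.
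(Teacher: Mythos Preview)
The paper does not give its own proof of this theorem; it is quoted as a result of Rimlinger \cite{Rimlinger:1987a} and used only as background. So there is nothing in the paper to compare your argument against.

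That said, your sketch contains two genuine gaps. First, the quotient $\widetilde{T}/U(P)$ is not the order tree $T_P$. Already for a free product $A \ast B$ with pregroup $P = A \cup B$, the order tree $T_P$ has three vertices (the classes of $1$, of $A \smallsetminus \{1\}$, and of $B \smallsetminus \{1\}$), whereas the Bass--Serre quotient is a single edge on two vertices. Rimlinger's construction passes from the order tree to a graph of groups by a more involved procedure, and the underlying graph need not coincide with $T_P$; finiteness of the graph still follows from finiteness of $P$, but not for the reason you give.

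Second, and more seriously, your stabilizer argument conflates the length function $l$ with translation length. The function $l$ of Theorem~\ref{th:split-1} records $d(v_0, g v_0)$ for a fixed basepoint $v_0$; an element fixing some other vertex $v_x$ need not satisfy $l(g) = 0$, and in particular need not be a single element of $P$. In $A \ast B$, for instance, any nontrivial conjugate $b a b^{-1}$ with $a \in A \smallsetminus \{1\}$, $b \in B \smallsetminus \{1\}$ fixes a vertex of the Bass--Serre tree but has reduced $P$-length $3$. Thus vertex stabilizers are not contained in single $\sim$-classes, and your finiteness claim fails as stated. The correct route, following Rimlinger, is to use his explicit description of the vertex and edge groups of the graph of groups: each one is identified with a subgroup sitting inside $P$ itself (roughly, a ``group of units'' at a given level of the order tree), and hence is finite whenever $P$ is.
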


In other words, if $G$ has a pregroup structure with a finite underlying pregroup $P$, then combining Theorem \ref{th:fin_pregorpoup} and the result of Stallings (see \cite{Stallings:1970}), we obtain that $G$ is virtually free. In particular, $G$ is hyperbolic. 

\smallskip

From now on we assume that $P$ is infinite.

Recall that
$$B_P = \{b \in P \mid z b\ {\rm and}\ b z\ {\rm are\ defined\ for\ all}\ z \in P\}.$$
As we already noted above, if the axiom (P6) holds, then in a reduced $P$-product $v_1 \cdots v_n$ obtained from another reduced $P$-product $u_1 \cdots u_n$ by interleaving $v_i = a_{i-1}^{-1} u_i a_i$ for $i \in [1, n]$, where $a_0 = a_n = 1$, the elements $a_1, \ldots, a_{n-1}$ all belong to $B_P$.

The first extra axiom we consider is very natural.

\begin{enumerate} \label{pr:H0}
\item[(H0)] $B_P$ is finite.
\end{enumerate}

Denote 
$$C_0 = \max\{|p| \mid p \in B_p\}.$$

From the axioms (P6) and (H0) combined, it follows that every $g \in G$ has only finitely many representations as reduced $P$-products.

Next, we define hyperbolicty of ``small'' triangles in $\Gamma(G, S)$.

\begin{enumerate} \label{pr:H1}
\item[(H1)] There exists a constant $C_1 > 0$ such that for every $p, q \in P$, any geodesic triangle on the vertices $\{1, p, p q\}$ in $\Gamma(G, S)$ is $C_1$-thin.
\end{enumerate}

In particular, if $q = 1$, then the digon in $\Gamma(G, S)$ formed by any geodesics connecting $1$ and $p \in P$ is $C_1$-thin.

Also, since every geodesic triangle on the vertices $\{f, g, h\}$, where $g = f p,\ h = g q$ and $p, q \in P$ is translate of a geodesic triangle on the vertices $\{1, p, p q\}$, from the axiom (H1) it follows that the triangle $\Delta(f, g, h)$ is $C_1$-thin.

Further, we are going to use the following axiom inspired by \cite[Lemma 21]{Ol'shanskii:1991} about geodesic polygons in hyperbolic spaces.

\begin{enumerate} \label{pr:H2}
\item[(H2)] There exists $C_2 > 0$ such that for any $p, q \in P$, if $(p^{-1} \cdot q)_1 > C_2$, then $p q \in P$.
\end{enumerate}

Finally, we are going to need one more axiom which deals with geodesic paths in $\Gamma(G, S)$ tracing pregroup elements.

\begin{enumerate} \label{pr:H3}
\item[(H3)] Let $p \in P$ and let $\gamma$ be a geodesic path connecting $1$ and $p$ in $\Gamma(G, S)$. Then, every $x \in \gamma$ can be represented as a reduced $P$-product $x = p_1 p_2$, where $|p_2| \leqslant C_3$ and $C_3$ does not depend on $p$ and $\gamma$.
\end{enumerate}

By symmetry, from (H3) it follows that $p^{-1} x$ can also be represented as a reduced $P$-product $p^{-1} x = q_1 q_2$, where $|q_2| \leqslant C_3$. Hence, $p = x (x^{-1} p) = (p_1 p_2) (q_2^{-1} q_1^{-1})$ and if the axiom (P6) holds for $P$, then it follows that $b = p_2 q_2^{-1} \in B_P$ and $p_1 b q_1^{-1} \in P$.

Now we can formulate the main result of the paper.

\begin{theorem} \label{th:main}
Le $G$ be a group with a finite generating set $S$ and suppose $G$ has a pregroup structure with respect to a pregroup $P$. If $P$ satisfies the axioms (P6), (H0) - (H3), then there exists $\delta > 0$ such that every geodesic triangle in $\Gamma(G, S)$ is $\delta$-thin. In particular, $G \simeq U(P)$ is hyperbolic.
\end{theorem}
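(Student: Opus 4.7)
The overall strategy is to reduce the thinness of arbitrary geodesic triangles in $\Gamma(G, S)$ to the ``local'' thinness of pregroup triangles provided by (H1), using the remaining axioms (H2), (H3), (H0), and (P6) to bridge between the metric geometry of $\Gamma(G, S)$ and the pregroup combinatorics. Throughout, I will use left-invariance of the metric to place one vertex of a triangle at $1$.

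The first step is a \textbf{fellow-traveller lemma}: for any $g \in G$ with a reduced $P$-product $g = u_1 \cdots u_n$, any geodesic $\gamma = [1, g]$ in $\Gamma(G, S)$ stays within a uniformly bounded distance $K$ of each intermediate vertex $g_i = u_1 \cdots u_i$. The tool here is (H3) together with (P6) and (H0): (H3) writes any point $x \in [1, p]$ (for $p \in P$) as a reduced $P$-product $p_1 p_2$ with $|p_2| \leqslant C_3$, so up to bounded error $x$ lies on the pregroup element $p_1$; combined with the $B_P$-interleaving description of reduced forms and the bound $C_0$ from (H0), one identifies a bounded neighborhood of $g_i$ that $\gamma$ must enter. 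Applying (H1) at each triple $\{g_{i-1}, g_i, g_{i+1}\}$ to compare $\gamma$ with the broken path $1 \to u_1 \to u_1 u_2 \to \cdots \to g$ then gives the uniform fellow-traveller constant.

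The second step is a \textbf{common-prefix lemma} based on (H2). If two geodesics $[1, g]$ and $[1, h]$ stay within distance $K$ of each other for a long time, then after pushing into pregroup coordinates via the fellow-traveller lemma, their reduced $P$-products must share a common prefix of comparable length, modulo interleaving by elements of $B_P$. Concretely, if the first pregroup factors $p$ (from $g$) and $q$ (from $h$) produce geodesics that track one another, then $(p^{-1} \cdot q)_1$ is forced to be large, so (H2) yields $pq^{-1} \in P$; since this element has bounded $|\cdot|$ relative to the fellow-travel width, the reduction axioms combined with (P6) show it must in fact lie in $B_P$. Iterating letter by letter yields matching pregroup prefixes up to bounded $B_P$-ambiguity.

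Finally, the thin triangle inequality follows by combining these two lemmas. Given $\Delta(1, g, h)$, factor out the maximal pregroup prefix $u$ common to $g$ and $h$ (up to $B_P$): write $g \sim u \cdot g'$ and $h \sim u \cdot h'$. The common-prefix lemma guarantees that $[1, g]$ and $[1, h]$ fellow-travel through a bounded neighborhood of $u$, reducing the problem to the subtriangle $\Delta(u, g, h)$ where, by maximality, the next pregroup letters of $g'$ and $h'$ admit no further $B_P$-interleaving. Applying (H1) iteratively along the broken paths from $u$ (and a symmetric argument for $[g, h]$) and chaining the constants gives a uniform bound $\delta$ depending only on $C_0, C_1, C_2, C_3, |B_P|,$ and $|S|$. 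The main obstacle I expect to encounter is the second step: controlling how two long-fellow-travelling geodesics must force agreement of pregroup factorizations up to $B_P$, and ensuring that the constants involved do not accumulate as one iterates along the length of $g$. The delicate point is that a single pregroup element $p$ can have arbitrarily large $|p|$ in $\Gamma(G, S)$, so distinct $p, q \in P$ with small $|pq^{-1}|$ need not agree up to $B_P$ without (H2); rigorously extracting this agreement and controlling propagation of error is the technical heart of the argument.
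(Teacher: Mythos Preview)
Your approach differs substantially from the paper's. The paper never proves a fellow-traveller statement relating honest geodesics $[1,g]$ to pregroup paths. Instead it invokes a known local-to-global criterion (Bowditch, Hamenst\"adt): to each ordered pair $(x,y)$ one assigns the family $\eta(x,y)$ of broken-geodesic paths along reduced $P$-products for $x^{-1}y$, and then verifies three conditions---(a) bounded diameter when $d(x,y)\le 1$, (b) uniform Hausdorff stability of $\eta$ under passing to subarcs, and (c) thinness of $\eta$-triangles. The auxiliary lemmas in the paper establish exactly these three items from (P6) and (H0)--(H3), and the criterion then outputs $\delta$-hyperbolicity of $\Gamma(G,S)$. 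No comparison between an arbitrary geodesic $[1,g]$ and the points $g_i=u_1\cdots u_i$ is ever made directly.

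The gap in your sketch is precisely this local-to-global passage, which you have buried in your first lemma. Axiom (H3) speaks only about geodesics $[1,p]$ with $p\in P$, not about $[1,g]$ for general $g\in G$, so it cannot be applied to the points of $\gamma=[1,g]$ as written. Axiom (H1) tells you that each triangle $\Delta(g_{i-1},g_i,g_{i+1})$ is $C_1$-thin, i.e.\ that $[g_{i-1},g_i]\cup[g_i,g_{i+1}]$ is close to $[g_{i-1},g_{i+1}]$; it says nothing about where the long geodesic $\gamma$ goes. Without an a priori reason for $\gamma$ to visit a bounded neighborhood of some $g_i$, there is no mechanism in your argument preventing $\gamma$ from shortcutting far from all of them, and ``chaining (H1) along the triples'' does not produce one. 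Indeed, converting local thinness of a system of preferred paths into global hyperbolicity (equivalently, into the statement that those paths are uniform quasi-geodesics) is exactly the content of the Bowditch--Hamenst\"adt criterion; your proposal must either invoke such a criterion explicitly or reproduce its proof. The second and third steps of your plan then inherit the same problem, since they take the fellow-traveller lemma as input.
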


The proof of the theorem above is going to rely on several lemmas presented in the next section.

\section{Proof of the main result}
\label{sec:proof}

For the rest of the section we assume that $G$ is a group with a finite generating set $S$. Suppose $G$ has a pregroup structure with respect to a pregroup $P$.

\subsection{Auxiliary lemmas}
\label{subsec:lemmas}

In this section we study geometric properties of $\Gamma(G, S)$ that are induced by the axioms (P6) and (H0) - (H3).

\begin{lemma} \label{le:fin-diameter}
Let $P$ satisfy (P6) and (H0). Then there exists $D_1 > 0$ such that for every generator $s \in S$ and any reduced $P$-product $p_1 \cdots p_k$ representing $s$, the diameter of the path in $\Gamma(G, S)$ corresponding to $p_1 \cdots p_k$ is bounded by $D_1$.
\end{lemma}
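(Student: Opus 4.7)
The plan is to reduce the claim to a finiteness argument, leaning on the observation already made in the text right after axiom (H0): the combination of (P6) and (H0) forces every $g \in G$ to have only finitely many reduced $P$-product representations. Granted that, the lemma is essentially a bookkeeping statement.

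First I would unpack why $g \in G$ has only finitely many reduced $P$-product representations. By (P6) and the discussion preceding the lemma, any two reduced $P$-products representing the same element have the same length $k$ and are related by an interleaving $v_i = a_{i-1}^{-1} u_i a_i$ with $a_0 = a_k = 1$ and $a_1, \ldots, a_{k-1} \in B_P$. Since (H0) gives $|B_P| < \infty$, the set of reduced $P$-products equivalent to a fixed one of length $k$ has size at most $|B_P|^{k-1}$. Hence $\mathcal{R}(g) := \{\text{reduced } P\text{-products representing } g\}$ is finite for each $g$.

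Next I would bound the diameter for a single representation. Fix $s \in S$ and a reduced $P$-product $p_1 \cdots p_k = s$. The path it traces in $\Gamma(G, S)$ visits the vertices $v_0 = 1,\ v_1 = p_1,\ v_2 = p_1 p_2,\ \ldots,\ v_k = s$, so its diameter is at most $2 \max_{0 \leqslant i \leqslant k} |v_i|$, a concrete finite number depending on the fixed product. Finally, since $S$ is finite and each $\mathcal{R}(s)$ is finite, the union $\bigcup_{s \in S} \mathcal{R}(s)$ is finite. Defining $D_1$ to be the maximum of the diameters taken over this finite set of paths yields the required uniform bound.

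The main (and really only) obstacle is justifying the finiteness of $\mathcal{R}(s)$; once the interleaving-with-$B_P$ description is invoked, everything else is a one-line maximum over a finite set. No further axioms beyond (P6) and (H0) are needed, and no appeal to (H1)--(H3) is required.
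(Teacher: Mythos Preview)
Your proposal is correct and follows essentially the same approach as the paper's proof: the paper simply cites the already-noted fact that (P6) and (H0) together give finitely many reduced $P$-products for each $s$, observes each such product yields a path of finite diameter, and takes the maximum over the finite set $S$. You have just unpacked the first step (the interleaving argument showing $|\mathcal{R}(g)| \leqslant |B_P|^{k-1}$) more explicitly than the paper does, but the overall structure is identical.
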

\begin{proof}
As we already notes before, from the axioms (P6) and (H0) it follows that for every $s \in S$, there are only finitely many reduced $P$-products representing $s$. Every such product has a finite diameter and since $S$ is finite, the statement of the lemma follows.
\end{proof}

From now on given a reduced $P$-product $u = u_1 \cdots u_n$, for every $i \in [1, n]$ we denote $u(i) = u_1 \cdots u_i$. In particular, $u(1) = u_1$ and $u(n) = u$.

\begin{lemma} \label{le:reduced-paths}
Let $P$ satisfy (P6), (H0), and (H1). Then there exists $D_2 > 0$ such that if reduced $P$-products $u_1 \cdots u_n$ and $v_1 \cdots v_n$ define the same element $g \in G$, then the Hausdorff distance between the paths defined by the products $u = u_1 \cdots u_n$ and $v = v_1 \cdots v_n$ in $\Gamma(G, S)$ is bounded by $D_2$.
\end{lemma}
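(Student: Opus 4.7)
The plan is to exploit the interleaving description of the equivalence $\sim$ on reduced $P$-products together with (P6), (H0), and the thinness axiom (H1). Since $u_1 \cdots u_n$ and $v_1 \cdots v_n$ represent the same element of $G$, the definition of $\sim$ supplies elements $a_0 = 1, a_1, \ldots, a_{n-1}, a_n = 1$ in $P$ with $v_i = a_{i-1}^{-1} u_i a_i$. By (P6) the middle $a_i$'s lie in $B_P$, so by (H0) each $|a_i| \leqslant C_0$; a telescoping computation identifies $v(i) = u(i)\, a_i$, so corresponding vertices of the two paths are $C_0$-close in $\Gamma(G,S)$.

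The main step is to bound the distance from an arbitrary point $x$ on a segment $[u(i-1), u(i)]$ of the $u$-path to the $v$-path (the reverse direction follows by symmetry). I plan a double-triangle argument using (H1) twice: the triangles $\Delta_1 = \Delta(u(i-1), u(i), v(i))$ and $\Delta_2 = \Delta(u(i-1), v(i-1), v(i))$ are translates of the triangles on $\{1, u_i, u_i a_i\}$ and $\{1, a_{i-1}, a_{i-1} v_i\}$ respectively, with both pregroup parameters in $P$, hence both are $C_1$-thin. I would choose the geodesic sides so that $[u(i-1), u(i)]$ and $[v(i-1), v(i)]$ lie along the given $u$- and $v$-paths, and so that the ``diagonal'' $[u(i-1), v(i)]$ is a common side of both triangles. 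A short case analysis then shows that $x$ is either within $C_1 + C_0$ of one of the short transversals $[u(i), v(i)]$ or $[u(i-1), v(i-1)]$ (each of length $\leqslant C_0$), or within $2C_1$ of the $v$-segment $[v(i-1), v(i)]$; in every case $x$ lies within $D_2 := 2C_1 + C_0$ of the $v$-path.

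The only delicate point I foresee is the legitimacy of forcing $\Delta_1$ and $\Delta_2$ to share a common diagonal, but (H1) supplies $C_1$-thinness for \emph{every} geodesic triangle on the given vertices, so one is free to pick the same geodesic for $[u(i-1), v(i)]$ in both triangles. The degenerate end cases $i = 1$ and $i = n$, where $a_{i-1}$ or $a_i$ is trivial, simply collapse one triangle to a segment and only simplify the argument. Beyond these details, the proof is a routine quadrilateral chase in which the ``vertical'' sides are absorbed into the uniform bound $C_0$ provided by (H0).
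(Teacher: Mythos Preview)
Your proposal is correct and follows essentially the same approach as the paper: both arguments use (P6)+(H0) to bound the ``vertical'' transversals $d(u(i),v(i))\leqslant C_0$, then split each quadrilateral $u(i-1),u(i),v(i),v(i-1)$ into two triangles along a diagonal and apply (H1) twice, arriving at the identical bound $D_2=C_0+2C_1$. The only cosmetic difference is that you cut along the diagonal $[u(i-1),v(i)]$ while the paper cuts along $[v(i-1),u(i)]$; the case analysis and the resulting constant are the same.
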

\begin{proof}
Recall that $B_P = \{b \in P \mid z b\ {\rm and}\ b z\ {\rm are\ defined\ for\ all}\ z \in P\}$. Let $g$ be represented by the reduced $P$-products $u = u_1 \cdots u_n$ and $v = v_1 \cdots v_n$. Then there exist $a_1, \ldots, a_{n-1} \in B_P$ such that 
$$u_1= v_1 a_1, \cdots, u_i = a^{-1}_{i-1} v_i a_i, \cdots, u_n = a_{n-1}^{-1} v_n$$
and we have the following picture in $\Gamma(G, S)$.

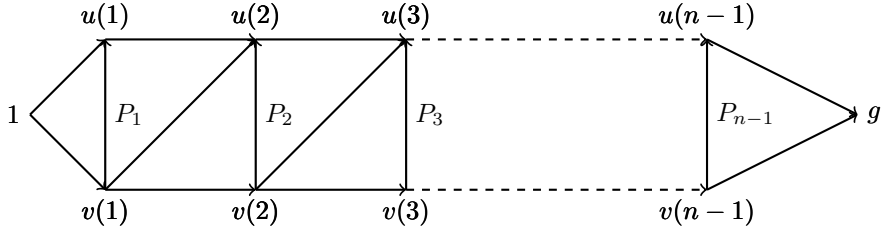
\begin{figure}[H]
\centering
\begin{tikzpicture}
\draw [thick,->] (0,0) node [anchor=east]{$1$} -- (1,1) node [anchor=south]{$u(1)$};
\draw [thick,->] (0,0) node [anchor=east]{$1$} -- (1,-1) node [anchor=north]{$v(1)$};
\draw [thick,->] (1,1) node [anchor=south]{$u(1)$} -- (3,1) node [anchor=south]{$u(2)$};
\draw [thick,->] (1,-1) node [anchor=north]{$v(1)$} -- (3,-1) node [anchor=north]{$v(2)$};
\draw [thick,->] (3,1) node [anchor=south]{$u(2)$} -- (5,1) node [anchor=south]{$u(3)$};
\draw [thick,->] (3,-1) node [anchor=north]{$v(2)$} -- (5,-1) node [anchor=north]{$v(3)$};
\draw [thick,->][thick,dashed] (5,1) node [anchor=south]{$u(3)$} -- (9,1) node [anchor=south]{$u(n-1)$};
\draw [thick,->][thick,dashed] (5,-1) node [anchor=north]{$v(3)$} -- (9,-1) node [anchor=north]{$v(n-1)$};
\draw [thick,->] (9,1) node [anchor=south]{$u(n-1)$} -- (11,0) node [anchor=west]{$g$};
\draw [thick,->] (9,-1) node [anchor=north]{$v(n-1)$} -- (11,0) node [anchor=west]{$g$};
\draw [thick,->] (1,-1) node [anchor=north]{$v(1)$} -- (1,1) node [anchor=south]{$u(1)$};
\draw [thick,->] (3,-1) node [anchor=north]{$v(2)$} -- (3,1) node [anchor=south]{$u(2)$};
\draw [thick,->] (5,-1) node [anchor=north]{$v(3)$} -- (5,1) node [anchor=south]{$u(3)$};
\draw [thick,->] (9,-1) node [anchor=north]{$v(n-1)$} -- (9,1) node [anchor=south]{$u(n-1)$};
\draw [thick,->] (1,-1) node [anchor=north]{$v(1)$} -- (3,1) node [anchor=south]{$u(2)$};
\draw [thick,->] (3,-1) node [anchor=north]{$v(2)$} -- (5,1) node [anchor=south]{$u(3)$};
\draw (1,-1) -- node[midway,right]{$P_1$} (1,1);
\draw (3,-1) -- node[midway,right]{$P_2$} (3,1);
\draw (5,-1) -- node[midway,right]{$P_3$} (5,1);
\draw (9,-1) -- node[midway,right]{$P_{n-1}$} (9,1);
\end{tikzpicture}
\caption{The paths defined by the reduced products $u$ and $v$.}
\label{le2-pic1}
\end{figure}

In the Figure \ref{le2-pic1}, the paths $P_1, \ldots, P_{n-1}$ read the words in $S$ representing the elements $a_1, \ldots, a_{n-1}$ and all the edges are geodesics. Since the label of $P_i$ represents $a_i \in B_P$, the length of each $P_i$ is bounded by a constant $C_0$, by the axiom (H0).

To find a bound on the Hausdorff distance between the paths defined by the products $u$ and $v$ take a point $t$ on the path defined by the product $u$. There exists $i$ such that $t \in [u(i), u(i+1)]$. The cell on the vertices $u(i), u(i+1), v(i), v(i+1)$ is shown below.

\begin{figure}[H]
\centering
\begin{tikzpicture}
\draw [thick,->] (0,0) node [anchor=north]{$v(i)$} -- (4,0) node [anchor=north]{$v(i+1)$};
\draw [thick,->] (0,0) node [anchor=north ]{$v(i)$} -- (0,2) node [anchor=south]{$u(i)$};
\draw [thick,->] (0,2) node [anchor=south ]{$u(i)$} -- (4,2) node [anchor=south]{$u(i+1)$};
\draw [thick,->] (4,0) node [anchor=north]{$v(i+1)$} -- (4,2) node [anchor=south ]{$u(i+1)$};
\draw [thick,->] (0,0) node [anchor=north]{$v(i)$} -- (4,2) node [anchor=south]{$u(i+1)$};
\filldraw[black] (2,0) circle (1.5pt) node[anchor=north]{$s^{\prime}$};
\filldraw[black] (2,2) circle (1.5pt) node [anchor=south]{$t$};
\filldraw[black] (0,1.2) circle (1.5pt) node[anchor=east]{$t^{\prime\prime}$};
\filldraw[black] (4,0.7) circle (1.5pt) node[anchor=west]{$s^{\prime\prime}$};
\filldraw[black] (2,1) circle (1.5pt) node [anchor=north east]{$t^{\prime}$};
\draw [thick,dashed] (0,1.2) -- (2,2);
\draw [thick,dashed] (2,0) .. controls (2.2,0.5) .. (2,1);
\draw [thick,dashed] (2,1) .. controls (1.8,1.5) .. (2,2);
\draw [thick,dashed] (2,1) .. controls (3,0.5) .. (4,0.7);
\end{tikzpicture}
\caption{The cell on the vertices $u(i), u(i+1), v(i), v(i+1)$.}
\label{le2-pic2}
\end{figure}
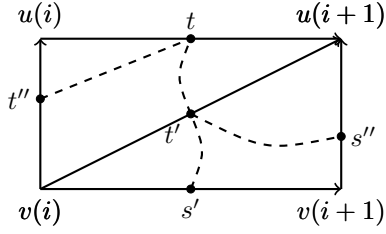

Note that any geodesic triangle $\Delta(u(i) , v(i), u(i+1))$ on the vertices $\{u(i) , v(i), u(i+1) \}$ is $C_1$-thin, which follows from (H1). Hence, either there exists $t' \in [u(i+1), v(i)]$ such that $d(t, t') \leqslant C_1$, or there exists $t'' \in [u(i), v(i)]$ such that $d(t, t') \leqslant C_1$. In the former case, since $d(u(i), v(i)) \leqslant C_0$ by the axiom (H0), we obtain 
$$d(t, v(i)) \leqslant d(t, t') + d(t', v(i)) \leqslant C_0 + C_1.$$ 
In the latter case, consider the geodesic triangle $\Delta(u(i) , v(i), u(i+1))$ on the vertices $\{ v(i) , u(i+1), v(i+1) \}$, which is also $C_1$-thin. Hence, either there exists $s' \in [v(i), v(i+1)]$ such that $d(t', s') \leqslant C_1$, or there exists $s'' \in [u(i+1), v(i+1)]$ such that $d(t', s'') \leqslant C_1$. In the former case we obtain 
$$d(t, s') \leqslant d(t, t') + d(t', s') \leqslant 2C_1,$$ 
and in the latter case we obtain
$$d(t, v(i+1)) \leqslant d(t, t') + d(t', s'') + d(s'', v(i+1)) \leqslant C_1 + C_1 + C_0 = C_0 + 2C_1.$$
Hence, the distance from an arbitrary $t$ on the path defined by the product $u$ to the path defined by the product $v$ is bounded by $C_0 + 2C_1$.

Exactly the same argument works in the opposite direction and we can take $D_1 = C_0 + 2C_1$.
\end{proof}

\begin{lemma} \label{le:thin-tiangles}
Let $P$ satisfy (P6), (H0), and (H1). There exists $D_3 > 0$ such that for any $f, g \in G$ represented by $P$-reduced products $u = u_1 \cdots u_k$ and $v = v_1 \cdots v_m$ respectively, the triangle formed by the paths defined by the products $u = u_1 \cdots u_k$, $v = v_1 \cdots v_m$, and $w = w_1 \cdots w_n$ (representing $f g$) in $\Gamma(G, S)$, is $D_3$-thin.
\end{lemma}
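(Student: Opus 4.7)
The plan is to analyze the pregroup reduction of $u \cdot v$ to a reduced $P$-product for $fg$ and to compare the resulting path with those defined by $u$ and $v$ using Lemma~\ref{le:reduced-paths} and (H1). By Lemma~\ref{le:reduced-paths}, I may replace $w$ by any reduced form $w'$ obtained by reducing $u \cdot v$ in $P$, at the cost of at most $D_2$ in the final constant. Under (P6), this reduction terminates, modulo $B_P$-interleaving (whose geometric effect is again controlled by Lemma~\ref{le:reduced-paths}), in one of two canonical shapes: (a) $w' = u_1 \cdots u_{k-t} v_{t+1} \cdots v_m$ (pure cancellation, $n = k+m-2t$); or (b) $w' = u_1 \cdots u_{k-t-1}\, c\, v_{t+2} \cdots v_m$ (cancellation with amalgamation, $c \in P$, $n = k+m-2t-1$).

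In case (a), the vertex $u(k-t) = fv(t)$ is common to all three paths, splitting the triangle into two bigons whose two sides are reduced $P$-paths for the same group element; by Lemma~\ref{le:reduced-paths} each bigon has Hausdorff width at most $D_2$, so the triangle is $D_2$-thin. In case (b), the analogous bigon argument handles everything outside the \emph{central bridge} between $u(k-t-1)$ and $fv(t+1)$, which is a single $P$-edge labeled $c$ on the $w'$-path but a detour of $2t+2$ $P$-edges passing through $f$ on the concatenation of the $u$-path with the $f$-translated $v$-path.

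The key point in case (b) is that the $t$ junction cancellations produce elements $b_i \in B_P$ with $u(k-i)^{-1} fv(i) = b_i$ for $i = 0, 1, \ldots, t$, each of length at most $C_0$ by (H0). This pairs the two halves of the detour vertex by vertex within distance $C_0$, so applying (H1) to the resulting near-digons (each consisting of a $P$-edge on one side, the paired $P$-edge on the other, and two short $B_P$-links at the ends) places each half of the detour within $C_1 + C_0$ of the other. For points on the central edge $c$ itself, I apply (H1) to the triangle on $\{u(k-t-1), u(k-t), fv(t+1)\}$ whose three sides are the $P$-elements $u_{k-t}$, $c$, and $b_t v_{t+1}$ (the last in $P$ because $b_t \in B_P$), followed by (H1) on the triangle $\{u(k-t), fv(t), fv(t+1)\}$ whose three sides are the $P$-elements $b_t$, $v_{t+1}$, $b_t v_{t+1}$. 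Chaining these two applications puts every point of $c$ within $2C_1 + C_0$ of the union of the $u$-path and the translated $v$-path.

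The main obstacle is case (b): a naive iteration of (H1) along the full reduction chain accumulates error proportional to the cancellation depth $t$, which is a priori unbounded. The $B_P$-pairing provided by (P6) and (H0) resolves this by replacing the $t$-step iteration with just two applications of (H1) at the endpoints of the central bridge, giving a uniform constant depending only on $D_2$, $C_0$, and $C_1$ (for instance $D_3 \leq D_2 + 2C_1 + C_0$).
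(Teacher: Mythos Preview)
Your proposal is correct and follows essentially the same approach as the paper: reduce the concatenation $u\cdot v$ in $P$, compare the result with $w$ via Lemma~\ref{le:reduced-paths}, and control the central cancellation region using the $B_P$-ladder (from (P6) and (H0)) together with finitely many applications of (H1). The paper organizes the cases slightly more finely (distinguishing $u_kv_1\notin P$, $u_kv_1\in P\setminus B_P$, and the one-step versus multi-step $B_P$ cases), and your symmetric parameterization of the amalgamated case~(b) misses the off-by-one situation where $(u_{k-1}b_1)v_2\notin P$, but this is covered by your ``modulo $B_P$-interleaving'' clause and the same triangle argument applies.
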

\begin{proof}
Since $w = w_1 \cdots w_n$ represents the element $f g$, the product $w$ can be obtained by reducing the concatenation
$$(u_1 \cdots u_{k-1} u_k) (v_1 v_2 \cdots v_m).$$

\smallskip

{\bf Case I.} $u_k v_1 \notin P$.

Hence, the product
$$u_1 \cdots u_{k-1} u_k v_1 v_2 \cdots v_m$$
is reduced, we have $n = k + m$ and there exist $a_1, \cdots, a_{n-1} \in B_P$ such that 
$$u_1 = w_1 a_1, u_2 = a_1^{-1} w_2 a_2, \cdots, u_k = a_{k-1}^{-1} w_k$$
$$v_1 = a_k^{-1} w_{k+1} a_{k+1}, v_2 = a_{k+1}^{-1} w_{k+2} a_{k+2}, \cdots, v_m = a_{k+m}^{-1} w_n.$$

The following picture in $\Gamma(G, S)$ illustrates the case.

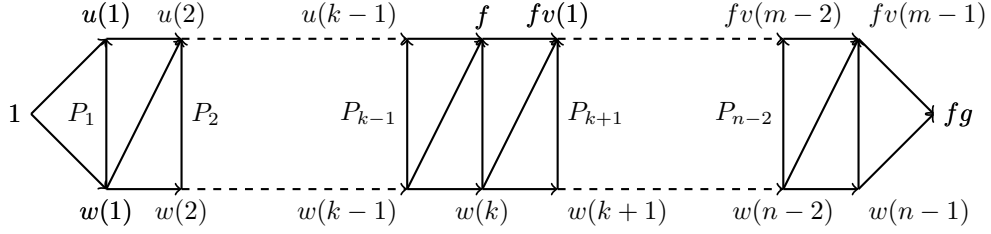
\begin{figure}[H]
\centering
\begin{tikzpicture}
\draw [thick,->] (0,0) node [anchor=east]{$1$} -- (1,1) node [anchor=south]{$u(1)$};
\draw [thick,->] (0,0) node [anchor=east]{$1$} -- (1,-1) node [anchor=north]{$w(1)$};
\draw [thick,->] (1,1) node [anchor=south]{$u(1)$} -- (2,1) node [anchor=south]{$u(2)$};
\draw [thick,->] (1,-1) node [anchor=north]{$w(1)$} -- (2,-1) node [anchor=north]{$w(2)$};
\draw [thick,->][thick,dashed] (2,1) -- (5,1) node [anchor=south east]{$u(k-1)$};
\draw [thick,->][thick,dashed] (2,-1) -- (5,-1) node [anchor=north east]{$w(k-1)$};
\draw [thick,->] (5,1) -- (6,1) node [anchor=south]{$f$};
\draw [thick,->] (5,-1) -- (6,-1) node [anchor=north]{$w(k)$};
\draw [thick,->] (6,1) node [anchor=south]{$f$} -- (7,1) node [anchor=south]{$f v(1)$};
\draw [thick,->] (6,-1) -- (7,-1) node [anchor=north west]{$w(k+1)$};
\draw [thick,->][thick,dashed] (7,1) node [anchor=south]{$f v(1)$} -- (10,1) node [anchor=south]{$f v(m-2)$};
\draw [thick,->] (10,1) -- (11,1) node [anchor=south west]{$f v(m-1)$};
\draw [thick,->][thick,dashed] (7,-1) -- (10,-1) node [anchor=north]{$w(n-2)$};
\draw [thick,->] (10,-1) -- (11,-1) node [anchor=north west]{$w(n-1)$};
\draw [thick,->] (11,1) -- (12,0) node [anchor=west]{$f g$};
\draw [thick,->] (11,-1) -- (12,0) node [anchor=west]{$f g$};
\draw [thick,->] (1,-1) -- node[midway,left]{$P_1$} (1,1);
\draw [thick,->] (2,-1) -- node[midway,right]{$P_2$} (2,1);
\draw [thick,->] (5,-1) -- node[midway,left]{$P_{k-1}$} (5,1);
\draw [thick,->] (6,-1) -- (6,1);
\draw [thick,->] (7,-1) -- node[midway,right]{$P_{k+1}$} (7,1);
\draw [thick,->] (10,-1) -- node[midway,left]{$P_{n-2}$} (10,1);
\draw [thick,->] (11,-1) -- (11,1);
\draw [thick,->] (1,-1) -- (2,1);
\draw [thick,->] (5,-1) -- (6,1);
\draw [thick,->] (6,-1) -- (7,1);
\draw [thick,->] (10,-1) -- (11,1);
\end{tikzpicture}
\caption{$u_k v_1 \notin P$.}
\label{le3-pic1}
\end{figure}

By Lemma \ref{le:reduced-paths}, the Hausdorff distance between the path defined by the product $u_1 \cdots u_{k-1} u_k v_1 v_2 \cdots v_m$ and the path defined by the product $w_1 \cdots w_n$ is bounded by $C_0 + 2C_1$. Hence, the required statement follows for $D_3 = C_0 + 2C_1$.

\smallskip

{\bf Case II.} $u_k v_1 \in P$.

Consider the following possibilities.

\smallskip

{\bf (a)} $u_k v_1 \notin B_p$.

Suppose the product $u_{k-1} (u_k v_1)$ is defined in $P$. Then, from the axiom (P6'), which is equivalent to (P6), it follows that $u_k v_1 \in B_p$, which is a contradiction with the assumption. Hence, $u_{k-1} (u_k v_1)$ is not defined. Similar argument shows that $(u_k v_1) v_2$ is also not defined. Thus, the $P$-product 
$$(u_1 \cdots u_{k-1}) (u_k v_1) (v_2 \cdots v_m)$$
is reduced and it represents the same element (which is $f g$) as the product $w = w_1 \cdots w_n$. Hence, $n = k + m - 1$ and there exist $a_1, \cdots, a_{n-1} \in B_P$ such that
$$u_1 = w_1 a_1, \cdots, u_{k-1} = a_{k-1}^{-1} w_{k-1} a_{k-1}, u_k v_1 = a_{k-1}^{-1} w_k a_k,$$
$$v_2 = a_k^{-1} w_{k+1} a_{k+1}, \cdots, v_{m-1} = a_{k + m - 3}^{-1} w_{k + m - 2} a_{k + m - 2}, v_m = a_{k + m - 2}^{-1} w_{k + m -1}$$
and we have the following picture in $\Gamma(G, S)$.

\begin{figure}[H]
\centering
\begin{tikzpicture}
\draw [thick,->] (0,0) node [anchor=east]{$1$} -- (1,0.5) node [anchor=south]{$u(1)$};
\draw [thick,->] (0,0) node [anchor=east]{$1$} -- (1,-0.5) node [anchor=north]{$w(1)$};
\draw [thick,->] (1,0.5) node [anchor=south]{$u(1)$} -- (2,0.5) node [anchor=south]{$u(2)$};
\draw [thick,->] (1,-0.5) node [anchor=north]{$w(1)$} -- (2,-0.5) node [anchor=north]{$w(2)$};
\draw [thick,->][thick,dashed] (2,0.5) -- (5,0.5) node [anchor=south east]{$u(k-1)$};
\draw [thick,->] (5,0.5) -- (7,0.5);
\draw [thick,->][thick,dashed] (2,-0.5) -- (5,-0.5) node [anchor=north east]{$w(k-1)$};
\draw [thick,->] (5,-0.5) -- (7,-0.5) node [anchor=north]{$w(k)$};
\draw [thick,->][thick,dashed] (7,0.5) node [anchor=south]{$f v(1)$} -- (10,0.5) node [anchor=south]{$f v(m-2)$};
\draw [thick,->] (10,0.5) -- (11,0.5) node [anchor=south west]{$f v(m-1)$};
\draw [thick,->][thick,dashed] (7,-0.5) -- (10,-0.5) node [anchor=north]{$w(n-2)$};
\draw [thick,->] (10,-0.5) -- (11,-0.5) node [anchor=north west ]{$w(n-1)$};
\draw [thick,->] (11,0.5) -- (12,0) node [anchor=west]{$f g$};
\draw [thick,->] (11,-0.5) -- (12,0) node [anchor=west]{$f g$};
\draw [thick,->] (1,-0.5) -- node [midway,left]{$P_1$} (1,0.5);
\draw [thick,->] (2,-0.5) -- node [midway,right]{$P_2$} (2,0.5);
\draw [thick,->] (5,-0.5) -- node [midway,left]{$P_{k-1}$} (5,0.5);
\draw [thick,->] (7,-0.5) -- node[midway,right]{$P_k$} (7,0.5);
\draw [thick,->] (10,-0.5) -- node[midway,left]{$P_{n-2}$}  (10,0.5);
\draw [thick,->] (11,-0.5) -- (11,0.5);
\draw [thick,->] (1,-0.5) -- (2,0.5);
\draw [thick,->] (5,-0.5) -- (7,0.5);
\draw [thick,->] (10,-0.5) -- (11,0.5);
\draw [thick,->] (5,0.5) -- (6,1.5) node [anchor=south]{$u(k) = f$};
\draw [thick,->] (6,1.5) -- (7,0.5);
\end{tikzpicture}
\caption{$u_k v_1 \in P \smallsetminus B_P$.}
\label{le3-pic2}
\end{figure}

Take a point $t$ on the path defined by the product $u$. If $t$ belongs to the path defined by $u(k-1)$, then using the axioms (H0) and (H1), we find a point $t'$ on the path defined by $w$ such that $d(t, t') \leqslant C_0 + 2C_1$. If $t \in [u(k-1), f]$, then since the geodesic triangle $\Delta(u(k-1), f, f v(1))$ is $C_1$-thin, there exists a point $s \in [u(k-1), f v(1)] \cup [f, f v(1)]$ such that $d(t, s) \leqslant C_1$. If $s \in [f, f v(1)]$, then we are done. If $s \in [u(k-1), f v(1)]$, then by Lemma \ref{le:reduced-paths} there exists a point $s' \in [w(k-1), w(k)]$ such that $d(s, s') \leqslant C_0 + 2C_1$ and
$$d(t, s') \leqslant d(t, s) + d(s, s') \leqslant C_0 + 3C_1.$$

Using a similar argument, we obtain the same bound in the case when $t$ belongs to the path defined by $v$ (translated to $f$) and the path defined by the product $w$.

\smallskip

{\bf (b)} $u_k v_1 \in B_p$.

It follows that $u_{k-1} (u_k v_1)$ and $(u_k v_1) v_2$ are both defined in $P$. Consider the following cases.

\smallskip

{\bf (1)} $(u_{k-1} (u_k v_1)) v_2$ is not defined in $P$.

It follows that the product 
$$(u_1 \cdots u_{k-2}) (u_{k-1} (u_k v_1)) (v_2 \cdots v_m)$$
is reduced and we obtain $n = k + m - 2$. Again, there exist $a_1, \cdots, a_n \in B_P$, using which one can interleave the above reduced product and obtain the product $w$.

Using the argument similar to {\bf (a)}, we obtain the required bound.

\smallskip

{\bf (2)} $(u_{k-1} (u_k v_1)) v_2$ is defined in $P$.

From the axiom (P6) it follows that $(u_{k-1} (u_k v_1)) v_2 \in B_P$ and the reduction process continues. Let us assume that 
$$(u_{r+1} \cdots u_{k-1}) (u_k v_1) (v_2 \cdots v_{s-1}) \in P \smallsetminus B_P$$
for some $r$ and $s$ (in this case $k - r = s - 1$). Hence, the product
$$(u_1 \cdots u_r) (u_{r+1} \cdots u_{k-1} u_k v_1 v_2 \cdots v_{s-1}) (v_s \cdots v_m)$$
is reduced and we have the following picture.

\begin{figure}[H]
\centering
\begin{tikzpicture}
\draw [thick,->] (0,-0.25) node [anchor=east]{$1$} -- (1,0.5) node [anchor=south]{$u_1$};
\draw [thick,->] (1,0.5) node [anchor=south]{$u_1$} -- (2,0.5) node [anchor=south]{$u(2)$};
\draw [thick,->] (1,-1) node [anchor=north]{$w_1$} -- (2,-1) node [anchor=north]{$w(2)$};
\draw [thick,->] (0,-0.25) node [anchor=east]{$1$} -- (1,-1) node [anchor=north]{$w_1$};
\draw [thick,->,dashed] (2,0.5) -- (4,0.5);
\draw [thick,->] (4,0.5) node [anchor=south]{$u(r-1)$} -- (5,0.5);
\draw [thick,->] (5,0.5) node [anchor=north west]{$u(r)$} -- (5,1.5) node [anchor=east]{$u(r+1)$};
\draw [thick,->,dashed] (5,1.5) -- (5,3) node [anchor= east]{$u(k-2)$};
\draw [thick,->] (5,3) node [anchor= east]{$u(k-2)$} -- (5,4) node [anchor=east]{$u(k-1)$};
\draw [thick,->] (5,4) -- (6,5) node [anchor=south]{$f$};
\draw [thick,->] (6,5) -- (7,4) node [anchor=west]{$f v_1$};
\draw [thick,->] (7,4) -- (7,3) node [anchor=west]{$f v(2)$};
\draw [thick,->,dashed] (7,3) -- (7,1.5) node [anchor=west]{$f v(s-1)$};
\draw [thick,->] (7,1.5) -- (7,0.5) node [anchor=south west]{$f v(s)$};
\draw [thick,->] (7,0.5) -- (8,0.5);
\draw [thick,->,dashed] (8,0.5) -- (10,0.5) node [anchor=south ]{$f v(m-2)$};
\draw [thick,->] (10,0.5) -- (11,0.5) node [anchor=south west]{$f v(m-1)$};
\draw [thick,->] (11,0.5) -- (12,-0.25);
\draw [thick,->,dashed] (7,-1) node [anchor=north east]{} -- (10,-1) node [anchor=north]{$w(n-2)$};
\draw [thick,->] (4,-1) node [anchor=north]{$w(r-1)$} -- (5,-1);
\draw [thick,->] (5,-1) node [anchor=north west]{$w(r)$} -- (7,-1) node [anchor=north]{$w(k+s)$};
\draw [thick,->] (7,-1) -- (8,-1);
\draw [thick,->] (7,-1) -- (7,0.5);
\draw [thick,->] (5,-1) -- (5,0.5);
\draw [thick,->] (5,-1) -- (7,0.5);
\draw [thick,->] (5,0.5) -- (7,0.5);
\draw [thick,->,dashed] (2,-1) -- (4,-1);
\draw [thick,->] (10,-1) -- (11,-1) node [anchor=north west]{$w(n-1)$};
\draw [thick,->] (11,-1) node [anchor=north]{} -- (12,-0.25) node [anchor=west]{$f g$};
\draw [thick,->] (1,-1) -- node [midway,left]{$P_1$} (1,0.5);
\draw [thick,->] (2,-1) -- node[midway,right]{$P_2$} (2,0.5);
\draw [thick,->] (10,-1) -- (10,0.5);
\draw [thick,->] (11,-1) -- (11,0.5);
\draw [thick,->] (1,-1) -- (2,0.5);
\draw [thick,->] (10,-1) -- (11,0.5);
\draw [thick,->] (5,3) -- (7,3);
\draw [thick,->] (5,4) -- (7,4);
\draw [thick,->] (5,1.5) -- (7,1.5);
\draw [thick,->] (5,3) -- (7,4);
\draw [thick,->] (5,0.5) -- (7,1.5);
\draw [thick,->] (4,-1) -- node [midway,left]{$P_{r-1}$} (4,0.5);
\draw [thick,->] (4,-1) -- (5,0.5);
\draw [thick,->] (8,-1) -- node [midway,right] {$P_{k+s+1}$} (8,0.5);
\draw [thick,->] (7,-1) -- (8,0.5);
\end{tikzpicture}
\caption{The general case.}
\label{le3-pic3}
\end{figure}
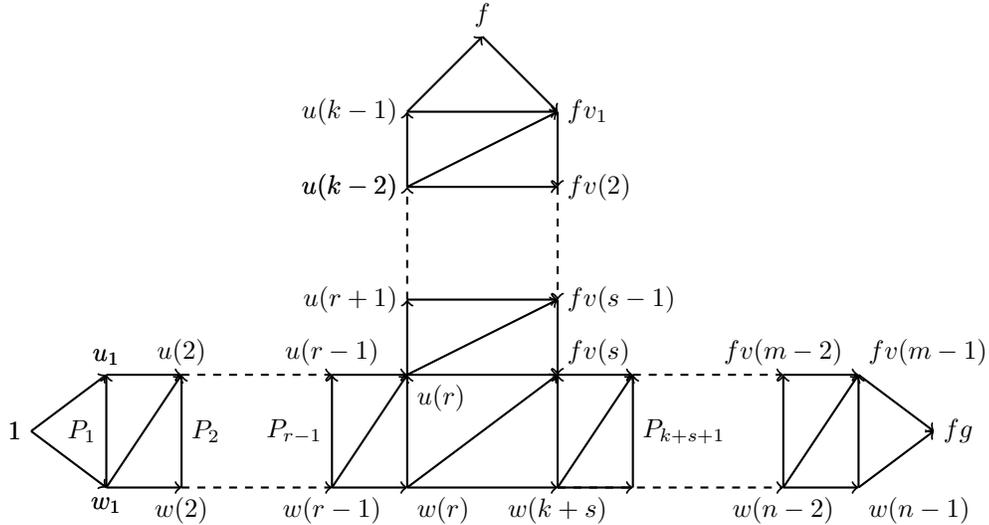

The argument in the general case is similar to {\bf (a)} and we obtain the same bound $D_3 = C_0 + 3C_1$.
\end{proof}

The above lemma can be generalized to the following result.

\begin{lemma} \label{le:thin-quads}
Let $P$ satisfy the axioms (P6) and (H0)-(H3). Then there exists $D_4 > 0$ such that for any reduced $P$-product $p = p_1 \cdots p_n$ and any point $x \in [1, p_1]$, the Hausdorff distance between the path
$$\mathcal{P} = [x, p(1)] \cup [p(1), p(2)] \cup \cdots \cup [p(n-1), p(n)]$$
and the path
$$\mathcal{U} = [x, x u(1)] \cup [x u(1), x u(2)] \cup \cdots \cup [x u(k-1), x u(k)]$$
defined by a reduced $P$-product $u = u_1 \cdots u_k$, representing $x^{-1} p$, is bounded by $D_4$.
\end{lemma}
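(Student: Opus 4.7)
The plan is to introduce an intermediate path $\mathcal{P}'$ based at $x$, built from $\mathcal{P}$ by replacing its initial geodesic segment $[x, p(1)]$ with a pregroup path of at most two edges from $x$ to $p(1)$. I would then control $d_H(\mathcal{P}, \mathcal{P}')$ via axiom (H1) and $d_H(\mathcal{P}', \mathcal{U})$ via Lemma \ref{le:thin-tiangles}, and combine these estimates.

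First, I would apply axiom (H3) and the remark following it to the geodesic $[1, p_1]$ containing $x$, obtaining reduced $P$-products $x = s\,t$ and $p_1^{-1} x = s'\,t'$ with $|t|, |t'| \leq C_3$. Hence $x^{-1} p_1 = t'^{-1}\,s'^{-1}$ is realized as a $P$-product of length at most two; let $r_1 r_2$ denote a reduced form of this product (possibly of length $0$, $1$, or $2$, depending on whether $t'^{-1}\,s'^{-1}$ happens to be defined in $P$), and let $\mathcal{R}$ be the corresponding path from $x$ to $p(1)$. The triangle on $\{x,\, x r_1,\, p(1)\}$ is a translate of one on $\{1, r_1, r_1 r_2\}$ with $r_1, r_2 \in P$ (or degenerates to a digon in the shorter cases), so by axiom (H1) it is $C_1$-thin. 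Choosing $[x, p(1)]$ to be the sub-segment of $[1, p(1)]$ that appears in $\mathcal{P}$, I would conclude $d_H([x, p(1)], \mathcal{R}) \leq C_1$.

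Defining
$$
\mathcal{P}' \;:=\; \mathcal{R} \cup [p(1), p(2)] \cup \cdots \cup [p(n-1), p(n)],
$$
since $\mathcal{P}$ and $\mathcal{P}'$ agree past $p(1)$, the previous estimate yields $d_H(\mathcal{P}, \mathcal{P}') \leq C_1$. By construction, $\mathcal{P}'$ is the concatenation of the reduced $P$-product path for $f := x^{-1} p_1$ (namely $\mathcal{R}$) with the reduced $P$-product path for $g := p_1^{-1} p = p_2 \cdots p_n$, both based at $x$; the reduced $P$-product $u_1 \cdots u_k$ for $f g = x^{-1} p$ traces out $\mathcal{U}$. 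Applying Lemma \ref{le:thin-tiangles} (whose proof actually establishes a Hausdorff bound between the concatenated $u$-$v$ path and the $w$-path, as seen by invoking Lemma \ref{le:reduced-paths} after the appropriate reductions) yields $d_H(\mathcal{P}', \mathcal{U}) \leq D_3$, so $d_H(\mathcal{P}, \mathcal{U}) \leq C_1 + D_3 =: D_4$.

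The main obstacle is obtaining the initial two-edge pregroup decomposition of $x^{-1} p_1$ together with a controlled-length factor; once axiom (H3) supplies it, the rest of the argument reduces cleanly to the already-established thin-triangle estimate for products of pregroup elements. A minor subtlety is the degenerate case in which $t'^{-1}\,s'^{-1}$ is defined in $P$ so that $\mathcal{R}$ collapses to a single edge; this is absorbed uniformly by the digon case ($q = 1$) of (H1).
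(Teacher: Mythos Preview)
Your reduction to Lemma~\ref{le:thin-tiangles} has a genuine gap. That lemma says the triangle formed by the $u$-path, the $v$-path, and the $w$-path is $D_3$-thin; it does \emph{not} say the concatenation of the $u$- and $v$-paths is Hausdorff-close to the $w$-path. These are different statements: thinness only tells you that a point on the $u$-path is close to the $v$-path \emph{or} the $w$-path, and in the presence of cancellation it may well be close only to the $v$-path. A free-product example makes this visible: if $u=a_1 b_1 a_2$ and $v=a_2^{-1}b_1^{-1}a_3$ in $A\ast B$, then $w=a_1a_3$ has length one while the vertex $a_1b_1$ on the $u$-path sits at distance $|b_1|$ from the $w$-path. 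So your parenthetical claim that ``the proof of Lemma~\ref{le:thin-tiangles} actually establishes a Hausdorff bound'' is simply false in general, and the appeal to it does not close the argument.

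In your specific situation the problematic region is the apex near $p_1$: when $r_2 p_2\in P$ (equivalently $c_1^{-1}p_2\in P$ in the paper's notation), the reduced product for $x^{-1}p$ skips $p_1$, and nothing in your argument bounds the distance from $p_1$ (or nearby points on $[x r_1,p_1]\cup[p_1,p(2)]$) to $\mathcal U$. This is exactly what the paper's proof handles by a case split that invokes (H2) and (P6): either $p_1c_1\notin P$, and (H2) forces $|c_1|+d(x,p_1)\le 2C_2+C_3$ so the whole apex triangle has bounded diameter; or $p_1c_1\in P$ and $c_1^{-1}p_2\in P$, and (P6) forces $c_1\in B_P$ so again $d(x,p_1)\le C_0+C_3$. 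Your outline never uses (H2), yet it is precisely (H2) that controls this apex. To repair your approach you would need to insert this dichotomy, at which point you are essentially reproducing the paper's case analysis rather than shortcutting it.
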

\begin{proof}
Note that since $x \in [1, p_1]$, by the axiom (H3), $p_1^{-1} x = c_1 c_2$, where $|c_2| \leqslant C_3$. Hence, the path
$$[x, p(1)] \cup [p(1), p(2)] \cup \cdots \cup [p(n-1), p(n)]$$
corresponds to the $P$-product
$$(c_2^{-1} c_1^{-1}) (p_2 \cdots p_n),$$
which may not be reduced, and which represents the element $x^{-1} p$. The same element is represented by the reduced $P$-product $u = u_1 \cdots u_k$, that is,
$$(c_2^{-1} c_1^{-1}) (p_2 \cdots p_n) = u_1 \cdots u_k$$
and we analyze the reduction process on the left-hand side. Consider the following cases.

\smallskip

{\bf Case I.} $p_1 c_1 \notin P$.

From the axiom (H2) it follows that $(p_1 c_1 \cdot 1)_{p_1} \leqslant C_2$ and we have
$$C_2 \geqslant (p_1 c_1 \cdot 1)_{p_1} \geqslant (p_1 c_1 \cdot x)_{p_1} = (p_1 c_1 \cdot p_1 c_1 c_2)_{p_1}$$
$$= \frac{1}{2} (d(p_1, p_1 c_1) + d(p_1, p_1 c_1 c_2) - d(p_1 c_1, p_1 c_1 c_2))$$
$$= \frac{1}{2} (|c_1| + d(x, p_1) - |c_2|) \geqslant \frac{1}{2} (|c_1| + d(x, p_1) - C_3).$$
That is,
$$|c_1| + d(x, p_1) \leqslant 2C_2 + C_3,$$
which means that both $|c_1|$ and $d(x, p_1)$ are bounded. Hence, any geodesic triangle $\Delta(x, p_1, p_1 c_1)$ has a bounded diameter. In particular, for every $t \in [x, p_1]$ there is a point $s \in [x, p_1 c_1] \cup [p_1 c_1, p_1]$ such that $d(t, s) \leqslant C_1 + C_3$.

\smallskip

{\bf (a)} $c_1^{-1} p_2 \notin P$.

In this case the product
$$c_2^{-1} c_1^{-1} p_2 \cdots p_n$$
is reduced. By Lemma \ref{le:reduced-paths}, the Hausdorff distance between the path 
$$[x, p_1 c_1] \cup [p_1 c_1, p(1)] \cup [p(1), p(2)] \cup \cdots \cup [p(n-1), p(n)]$$
and $\mathcal{U}$ is bounded by a constant $D_2$. The Hausdorff distance between $[x, p_1]$ and $[x, p_1 c_1] \cup [p_1 c_1, p(1)]$ is bounded by $C_1 + C_3$. Hence, we can take $D_4 = D_3 + C_1 + C_3$.

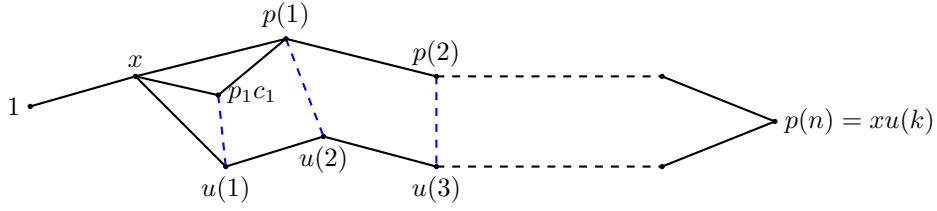
\begin{figure}[H]
\centering
\begin{tikzpicture}
\draw [thick] (0.6,0.1) node [anchor=east]{$1$} --(2,0.5)node [anchor=south]{$x$};
\draw [thick] (2,0.5)  --(4,1)node [anchor=south]{$p(1)$};
\draw [thick] (2,0.5)  --(3.1,0.25);
\draw [thick] (3.1,0.25) node [anchor=west]{$p_1c_1$} --(4,1);
\draw [thick] (4,1)  --(6,0.5)node [anchor=south]{$p(2)$};
\draw [thick,dashed] (6,0.5) --(9,0.5);
\draw [thick] (9,0.5)  --(10.5,-0.1)node [anchor= west]{$p(n)=xu(k)$};
\draw [thick] (2,0.5) --(3.2,-0.7)node [anchor=north]{$u(1)$};
\draw [thick] (3.2,-0.7)  --(4.5,-0.3)node [anchor=north]{$u(2)$};
\draw [thick] (4.5,-0.3)  --(6,-0.7)node [anchor=north]{$u(3)$};
\draw [thick,dashed] (6,-0.7)  --(9,-0.7);
\draw [thick] (9,-0.7)  --(10.5,-0.1);
\draw [thick,dashed,blue] (3.1,0.25)  --(3.2,-0.7);
\draw [thick,dashed,blue] (4,1)  --(4.5,-0.3);
\draw [thick,dashed,blue] (6,0.5)  --(6,-0.7);
\filldraw[black] (0.6,0.1) circle (0.8 pt);
\filldraw[black] (2,0.5) circle (0.8 pt);
\filldraw[black] (4,1) circle (0.8 pt);
\filldraw[black] (3.1,0.25) circle (0.8 pt);
\filldraw[black] (6,0.5) circle (0.8 pt);
\filldraw[black] (9,0.5) circle (0.8 pt);
\filldraw[black] (10.5,-0.1) circle (0.8 pt);
\filldraw[black] (3.2,-0.7) circle (0.8 pt);
\filldraw[black] (4.5,-0.3) circle (0.8 pt);
\filldraw[black] (6,-0.7) circle (0.8 pt);
\filldraw[black] (9,-0.7) circle (0.8 pt);
\end{tikzpicture}
\caption{$c_1^{-1} p_2 \notin P$.}
\label{le4-pic1}
\end{figure}

\smallskip

{\bf (b)} $c_1^{-1} p_2 \in P$, but $c_2^{-1} (c_1^{-1} p_2) \notin P$.

In this case the product
$$c_2^{-1} (c_1^{-1} p_2) p_3 \cdots p_n$$
is reduced. By Lemma \ref{le:reduced-paths}, the Hausdorff distance between the path 
$$[x, p_1 c_1] \cup [p_1 c_1, p(2)] \cup [p(2), p(3)] \cup \cdots \cup [p(n-1), p(n)]$$
and $\mathcal{U}$ is bounded by a constant $D_2$. At the same time, the geodesic triangle $\Delta(p_1 c_1, p_1, p(2))$ is $C_2$-thin and $d(p_1, p_1 c_1) \leqslant 2C_2 + C_3$. Hence, the Hausdorff distance between $[x, p_1] \cup [p_1, p(2)]$ and $[x, p_1 c_1] \cup [p_1 c_1, p(2)]$ is bounded by $C_1 + 2C_2 + C_3$. 
Hence, we can take $D_4 = D_3 + C_1 + 2C_2 + C_3$.

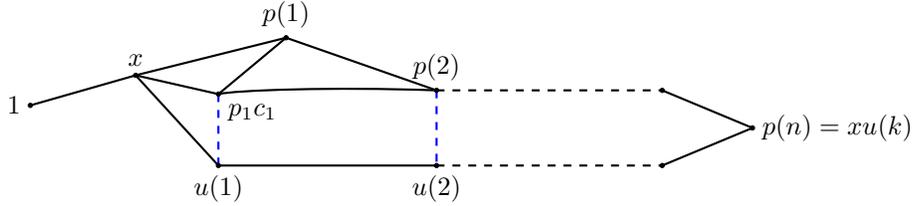
\begin{figure}[H]
\centering
\begin{tikzpicture}
\draw [thick] (0.6,0.1) node [anchor=east]{$1$} --(2,0.5)node [anchor=south]{$x$};
\draw [thick] (2,0.5)  --(4,1)node [anchor=south]{$p(1)$};
\draw [thick] (2,0.5)  --(3.1,0.25);
\draw [thick] (3.1,0.25) node [anchor=north west]{$p_1c_1$} --(4,1);
\draw [thick] (4,1)  --(6,0.3)node [anchor=south]{$p(2)$};
\draw [thick,dashed] (6,0.3) --(9,0.3);
\draw [thick] (9,0.3)  --(10.2,-0.2)node [anchor= west]{$p(n)=xu(k)$};
\draw [thick] (2,0.5) --(3.1,-0.7)node [anchor=north]{$u(1)$};
\draw [thick] (3.1,-0.7)  --(6,-0.7)node [anchor=north]{$u(2)$};
\draw [thick,dashed] (6,-0.7)  --(9,-0.7);
\draw [thick] (9,-0.7)  --(10.2,-0.2);
\draw [thick] (3.1,0.25)  ..controls(3.3,0.3) and (4.6,0.35)..(6,0.3);
\draw [thick,dashed,blue] (3.1,0.25)  --(3.1,-0.7);
\draw [thick,dashed,blue] (6,0.3)  --(6,-0.7);
\filldraw[black] (0.6,0.1) circle (0.8 pt);
\filldraw[black] (2,0.5) circle (0.8 pt);
\filldraw[black] (4,1) circle (0.8 pt);
\filldraw[black] (3.1,0.25) circle (0.8 pt);
\filldraw[black] (6,0.3) circle (0.8 pt);
\filldraw[black] (9,0.3) circle (0.8 pt);
\filldraw[black] (10.2,-0.2) circle (0.8 pt);
\filldraw[black] (3.1,-0.7) circle (0.8 pt);
\filldraw[black] (6,-0.7) circle (0.8 pt);
\filldraw[black] (9,-0.7) circle (0.8 pt);
\end{tikzpicture}
\caption{$c_1^{-1} p_2 \in P$, but $c_2^{-1} (c_1^{-1} p_2) \notin P$.}
\label{le4-pic2}
\end{figure}

\smallskip

{\bf (c)} $c_1^{-1} p_2 \in P$ and $c_2^{-1} (c_1^{-1} p_2) \in P$.

From the axiom (P6) it follows that $c_1^{-1} p_2 \in B_P$, hence, by the axiom (H0) we have $d(p_1 c_1, p(2)) \leqslant C_0$. It follows that $|p_2| \leqslant C_0 + 2C_2 + C_3$ an the geodesic triangles $\Delta(p_1 c_1, p_1, p(2))$ and $\Delta(x, p_1 c_1, p(2))$ have bounded diameters. Regardless of whether $(c_2^{-1} (c_1^{-1} p_2)) p_3$ is defined in $P$, we obtain a bounded Hausdorff distance between $\mathcal{P}$ and $\mathcal{U}$.

\begin{figure}[H]
\centering
\begin{tikzpicture}
\draw [thick] (0.6,0.1) node [anchor=east]{$1$} --(2,0.5)node [anchor=south]{$x$};
\draw [thick] (2,0.5)  --(4,1)node [anchor=south]{$p(1)$};
\draw [thick] (2,0.5) ..controls(3.3,0.4) and (4.6,0.4).. (6,0.5);
\draw [thick] (4,1)  --(6,0.5)node [anchor=south]{$p(2)$};
\draw [thick,dashed] (6,0.5) --(9,0.5);
\draw [thick] (9,0.5)  --(10.5,-0.1)node [anchor= west]{$p(n)=xu(k)$};
\draw [thick] (2,0.5)  ..controls(2.5,0.1) and (3,-0.3)..(6,-0.7)node [anchor=north]{$u(1)$};
\draw [thick,dashed] (6,-0.7)  --(9,-0.7);
\draw [thick] (9,-0.7)  --(10.5,-0.1);
\draw [thick,dashed,blue] (6,0.5)  --(6,-0.7);
\filldraw[black] (0.6,0.1) circle (0.8 pt);
\filldraw[black] (2,0.5) circle (0.8 pt);
\filldraw[black] (4,1) circle (0.8 pt);
\filldraw[black] (6,0.5) circle (0.8 pt);
\filldraw[black] (9,0.5) circle (0.8 pt);
\filldraw[black] (10.5,-0.1) circle (0.8 pt);
\filldraw[black] (6,-0.7) circle (0.8 pt);
\filldraw[black] (9,-0.7) circle (0.8 pt);
\end{tikzpicture}
\caption{$c_1^{-1} p_2 \in P$ and $c_2^{-1} (c_1^{-1} p_2) \in P$.}
\label{le4-pic3}
\end{figure}
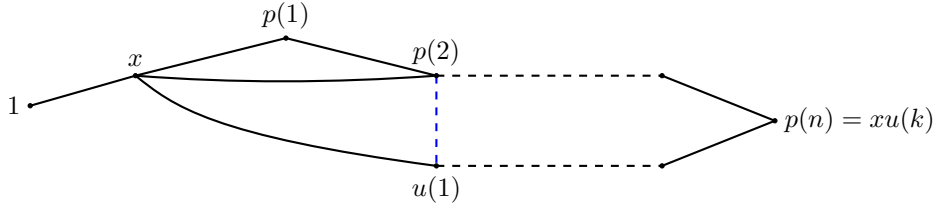

\smallskip

{\bf Case II.} $p_1 c_1 \in P$.

Consider the following possibilities, which are similar to the ones in {\bf Case I.}

\smallskip

{\bf (a)} $c_1^{-1} p_2 \notin P$.

The product
$$c_2^{-1} c_1^{-1} p_2 \cdots p_n$$
is reduced. By Lemma \ref{le:reduced-paths}, the Hausdorff distance between the path 
$$[x, p_1 c_1] \cup [p_1 c_1, p(1)] \cup [p(1), p(2)] \cup \cdots \cup [p(n-1), p(n)]$$
and $\mathcal{U}$ is bounded by a constant $D_2$. The Hausdorff distance between $[x, p_1]$ and $[x, p_1 c_1] \cup [p_1 c_1, p(1)]$ is bounded by $C_1 + C_3$. Hence, we can take $D_4 = D_3 + C_1 + C_3$.

\smallskip

{\bf (b)} $c_1^{-1} p_2 \in P$.

Since $p_1 p_2$ is reduced, but both $p_1 c_1 \in P$ and $c_1^{-1} p_2 \in P$, from the axiom (P6) it follows that $c_1 \in B_P$. Hence, $c_1 c_2 = q \in P$. Now, considering the cases $q p_2 \in P$ and $q p_2 \notin P$ and using Lemma \ref{le:reduced-paths} we obtain the bounded Hausdorff distance between $\mathcal{P}$ and $\mathcal{U}$.
\end{proof}

Finally, the following lemma is crucial for the proof of the main theorem. For a geodesic $\gamma$ in $\Gamma(G, S)$ and two points $x, y$ on $\gamma$, by $\gamma(x, y)$ we denote the segment of $\gamma$ between $x$ and $y$.

\begin{lemma} \label{le:polygons}
Let $P$ satisfy the axioms (P6) and (H0)-(H3). Then there exists $D_5 > 0$ such that for every geodesic $\gamma$ connecting an element $p \in P \smallsetminus B_P$ to the identity in $\Gamma(G, S)$ and any points $x, y \in \gamma$, the Hausdorff distance between $\gamma(x, y)$ and the path 
$$\mathcal{C} = [x, x c(1)] \cup [x c(1), x c(2)] \cup \cdots \cup [x c(k-1), x c(k)]$$
defined by any reduced $P$-product $c_1 \cdots c_k$ representing $x^{-1} y$, is bounded by $D_5$.
\end{lemma}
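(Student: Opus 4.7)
The proof extends the strategy of Lemma~\ref{le:thin-quads}: we apply (H3) at both endpoints $x$ and $y$, and invoke (H2) to bring the difference element $p_1^{-1}r_1$ into $P$. By (H3), write $x = p_1 p_2$, $p^{-1}x = q_1 q_2$, $y = r_1 r_2$, and $p^{-1}y = s_1 s_2$ as reduced $P$-products with $|p_2|, |q_2|, |r_2|, |s_2| \leqslant C_3$, and assume without loss of generality that $d(1,x) \leqslant d(1,y)$. Since $\gamma$ is a geodesic, $|y| = |x| + d(x,y)$; combined with $|p_1| \geqslant |x| - C_3$, $|r_1| \geqslant |y| - C_3$, and $d(p_1,r_1) \leqslant d(x,y) + 2C_3$, this gives $(p_1 \cdot r_1)_1 \geqslant |x| - 2C_3$, and analogously $(q_1 \cdot s_1)_1 \geqslant d(y,p) - 2C_3$.

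Assume first we are in the \emph{main case} $|x| > C_2 + 2C_3$ (the case $d(y,p) > C_2 + 2C_3$ is symmetric, via the $p$-sided decompositions). By (H2), $e := p_1^{-1}r_1 \in P$, and so $x^{-1}y = p_2^{-1}\, e\, r_2$ as a (possibly non-reduced) three-term $P$-product. Analyzing the reduction via (P6) and (H0) in the spirit of Lemma~\ref{le:thin-quads}, the reduced form $c_1 \cdots c_k$ satisfies $k \leqslant 3$ and
$$|\mathcal{C}| \;=\; \textstyle\sum_i |c_i| \;\leqslant\; |p_2| + |e| + |r_2| + 4C_0 \;\leqslant\; d(x,y) + K,$$
with $K := 4C_3 + 4C_0$, using $|e| \leqslant d(p_1,r_1) \leqslant d(x,y) + 2C_3$ and the fact that $\sim$-interleaving inflates each factor by at most $C_0$ on each side. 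In the \emph{degenerate case} where both $|x|$ and $d(y,p)$ are at most $C_2 + 2C_3$, the element $x^{-1}y\, p^{-1}$ has word length $\leqslant |x| + d(y,p)$, so $x^{-1}y$ differs from $p$ by a bounded-length left factor, and the claim then reduces to Lemma~\ref{le:fin-diameter} applied to this bounded factor.

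Let $\mathcal{U}_x, \mathcal{U}_y$ denote the reduced $P$-product paths for $x^{-1}p$ and $y^{-1}p$ starting at $x$ and $y$, respectively. By Lemma~\ref{le:thin-quads} applied at $x$ and at $y$, Hausdorff$(\gamma(x,p),\mathcal{U}_x) \leqslant D_4$ and Hausdorff$(\gamma(y,p),\mathcal{U}_y) \leqslant D_4$. Since $(x^{-1}y)(y^{-1}p) = x^{-1}p$, Lemma~\ref{le:thin-tiangles} yields that the triangle on $\{x,y,p\}$ with sides $\mathcal{C}, \mathcal{U}_y, \mathcal{U}_x$ is $D_3$-thin. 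We now bound Hausdorff$(\gamma(x,y),\mathcal{C})$ in both directions. \emph{Forward:} any $t \in \gamma(x,y) \subseteq \gamma(x,p)$ is within $D_4$ of some $t' \in \mathcal{U}_x$, which by thinness is within $D_3$ of $\mathcal{C} \cup \mathcal{U}_y$; if the close point lies in $\mathcal{U}_y$, then $t$ is within $D_3 + 2D_4$ of $\gamma(y,p)$, whose nearest point to $\gamma(x,y)$ is $y \in \mathcal{C}$. \emph{Backward:} for $s \in \mathcal{C}$, thinness gives $s$ within $D_3$ of $\mathcal{U}_x \cup \mathcal{U}_y$, hence within $D_3 + D_4$ of some $s' \in \gamma(x,p)$; if $s' \in \gamma(x,y)$ we are done, otherwise $s' \in \gamma(y,p)$, and the geodesicity of $\gamma$ gives $d(x,s) \geqslant d(x,y) + d(y,s') - (D_3+D_4)$, which combined with $d(x,s) \leqslant |\mathcal{C}| \leqslant d(x,y) + K$ forces $d(y,s') \leqslant K + D_3 + D_4$, hence $d(s,y) \leqslant K + 2(D_3+D_4)$. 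Setting $D_5 := K + 2(D_3+D_4)$ completes the proof.

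The crucial difficulty is the backward bound: a priori, a point $s \in \mathcal{C}$ could lie geometrically close to $\gamma(y,p) \smallsetminus \{y\}$ yet far from $\gamma(x,y)$; it is precisely the length estimate $|\mathcal{C}| \leqslant d(x,y) + K$ from the main-case analysis (which itself rests on (H2) to place $p_1^{-1}r_1 \in P$) that rules this out. The degenerate case requires a separate but easier treatment via Lemma~\ref{le:fin-diameter}, since there $x^{-1}y$ is obtained from $p$ by multiplication with bounded-length elements.
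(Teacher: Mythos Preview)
Your approach is correct and genuinely different from the paper's. The paper proceeds by a direct case analysis on the $P$-length $k$ of the reduced product $c_1\cdots c_k$: it shows algebraically (using (P6) and the expression $p=(p_1p_2)(c_1\cdots c_k)(q_2^{-1}q_1^{-1})$) that $k\leqslant 4$, and then carries out a detailed geometric analysis of each case $k\in\{1,2,3,4\}$, repeatedly invoking (H1), (H2) on the various sub-triangles of the configuration. Your route is more structural: you first obtain the length estimate $|\mathcal{C}|\leqslant d(x,y)+K$ (via (H2) forcing $p_1^{-1}r_1\in P$), then leverage Lemma~\ref{le:thin-quads} at $x$ and $y$ together with the $D_3$-thinness of the $P$-product triangle on $\{x,y,p\}$ from Lemma~\ref{le:thin-tiangles}, and finally use the length bound to control the ``overshoot'' into $\gamma(y,p)$. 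This reuses the earlier lemmas more heavily and avoids the paper's lengthy case-by-case geometry; the paper's argument, in return, is more self-contained and makes the bound on $k$ explicit.

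One point to tighten: your treatment of the degenerate case is not right as stated. The claim that $|x^{-1}y\,p^{-1}|\leqslant |x|+d(y,p)$ uses $|yp^{-1}|=d(y,p)$, which fails since the word metric is only left-invariant; and Lemma~\ref{le:fin-diameter} concerns single generators, not arbitrary bounded-length elements, so the ``reduction'' you invoke is not available. Fortunately the degenerate case is actually the easiest: your forward/backward argument using the triangle on $\{x,y,p\}$ still applies verbatim, and when $d(y,p)\leqslant C_2+2C_3$ the overshoot $d(y,s')$ is bounded by $d(y,p)$ automatically, with no need for the length estimate on $|\mathcal{C}|$. So the degenerate case requires no separate mechanism --- just observe that the only place the bound $|\mathcal{C}|\leqslant d(x,y)+K$ enters is to control $d(y,s')$, and that quantity is already bounded when $d(y,p)$ is small. (Similarly, a misstatement: you write ``$x^{-1}y$ differs from $p$ by a bounded-length left factor'', but in fact $x^{-1}y = x^{-1}\cdot p\cdot(p^{-1}y)$ involves bounded factors on both sides.)
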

\begin{proof}
If $x = 1$ and $y = p$, then $\gamma(x, y) = \gamma$ and the Hausdorff distance between $\gamma$ and any other geodesic connecting $1$ and $p$ is bounded by $\delta$, which follows from (H1).

\smallskip

Suppose, $x = 1$ and $y$ is a point on $\gamma$. From (H3) it follows that $y = p_1 p_2$, which is a reduced $P$-product. Hence, any geodesic triangle $\Delta(1, p_1, p_1 p_2) = [1, p_1] \cup [p_1, p_1 p_2] \cup [1, p_1 p_2]$ on the vertices $\{1, p_1, p_1 p_2\}$ is $C_1$-thin, by the axiom (H1), and for every $t_0 \in [1, p_1 p_2]$ there exists $t_1 \in [1, p_1] \cup [p_1, p_1 p_2]$ such that $d(t_0, t_1) \leqslant C_1$. At the same time, from (H3) it also follows that $|p_2| \leqslant C_3$, so for every point $z_0 \in [1, p_1] \cup [p_1, p_1 p_2]$ there exists a point $z_1 \in [1, p_1 p_2]$ such that $d(z_0, z_1) \leqslant C_1 + C_3$. The axiom (H2) can also be applied here: the Gromov product $(x \cdot y)_{p_1}$ is bounded by $C_2$ since the product $p_1 p_2$ is $P$-reduced and the distance from any $z_0 \in [1, p_1] \cup [p_1, p_1 p_2]$ to the geodesic $[1, p_1 p_2]$ is bounded by $C_1 + C_2$.

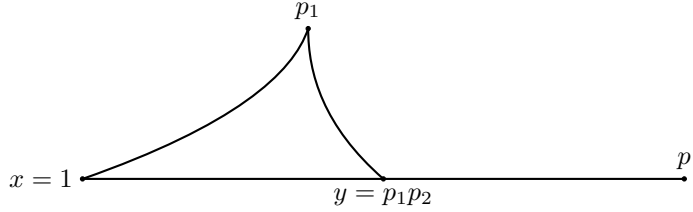
\begin{figure}[H]
\centering
\begin{tikzpicture}
\draw [thick] (0,0) node [anchor=east]{$x = 1$} -- (8,0) node [anchor=south]{$p$};
\draw [thick] (0,0) .. controls (2,0.7) and (2.8,1.4) .. (3,2) node [anchor=south]{$p_1$};
\draw [thick](4,0) node [anchor=north]{$y = p_1 p_2$} .. controls (3.2,0.7) and (3,1.4) .. (3,2);
\filldraw [black] (0,0) circle (0.8 pt);
\filldraw [black] (8,0) circle (0.8 pt);
\filldraw [black] (3,2) circle (0.8 pt);
\filldraw [black] (4,0) circle (0.8 pt);
\end{tikzpicture}
\caption{$x = 1$ and $y$ is a point on $\gamma$.}
\label{le4-pic4}
\end{figure}

If $y = p$ and $x$ is a point on $\gamma$, then the argument above works by symmetry.

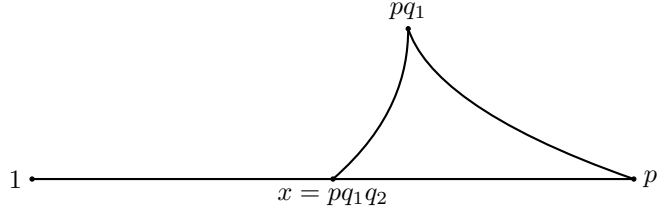
\begin{figure}[H]
\centering
\begin{tikzpicture}
\draw [thick] (0,0) node [anchor=east]{$1$} -- (8,0) node [anchor= west]{$p$};
\draw [thick] (4,0) node [anchor=north]{$x = p q_1 q_2$} .. controls (4.8,0.7) and (5,1.4) .. (5,2) node [anchor=south]{$p  q_1$};
\draw [thick] (8,0) .. controls (6,0.7) and (5.2,1.4) .. (5,2);
\filldraw [black] (0,0) circle (0.8 pt);
\filldraw [black] (8,0) circle (0.8 pt);
\filldraw [black] (5,2) circle (0.8 pt);
\filldraw [black] (4,0) circle (0.8 pt);
\end{tikzpicture}
\caption{$y = p$ and $x$ is a point on $\gamma$.}
\label{le4-pic5}
\end{figure}

\smallskip

Now suppose $x, y \in \gamma$ are arbitrary points (without loss of generality we assume that $x \in \gamma(1, y)$). Note that by the axiom (H3), $x = p_1 p_2$, where $|p_2| \leqslant C_3$, and, similarly, $p^{-1} y = q_1 q_2$, where $|q_2| \leqslant C_3$.

Note that $x^{-1} y \in G$, hence, $x^{-1} y = c_1 \cdots c_k$, where $c_i \in P$ and the product on the right is $P$-reduced. Recall that 
$$\mathcal{C} = [x, x c(1)] \cup [x c(1), x c(2)] \cup \cdots \cup [x c(k-1), x c(k)]$$
be the path in $\Gamma(G, S)$ defined by $c_1 \cdots c_k$, where $[x c(i), c(i+1)]$ is an arbitrary geodesic connecting $x c(i)$ and $x c(i+1)$ for $i \in [0, k - 1]$.

\smallskip

{\bf Case I.} If $k = 1$, or $k = 2$, then the Hausdorff distance between $\mathcal{C}$ and $\gamma(x, y)$ is bounded by $C_1 + C_2$, which follows from the axioms (H1) and (H2).
\begin{figure}[H]
\centering
\begin{tikzpicture}
\draw [thick] (0,0) node [anchor=east]{$1$} -- (9,0) node [anchor= west]{$p$};
\draw [thick] (0,0) .. controls (2,0.7) and (2.4,1.4) .. (2.5,2) node [anchor=south]{$p_1$};
\draw [thick] (2.5,2) .. controls (2.5,1.4) and (2.6,0.7) .. (3,0) node [anchor=north]{$x=p_1p_2$};
\draw [thick] (3,0) .. controls (3.8,0.7) and (4.4,1.4) .. (4.5,2) node [anchor=south]{$x c_1$};
\draw [thick] (4.5,2) .. controls (4.6,1.4) and (5.2,0.7) .. (6,0);
\draw [thick](6,0) node [anchor=north]{$y = x c_1 c_2 = p q_1 q_2$} .. controls (6.4,0.7) and (6.5,1.4) .. (6.5,2) node [anchor=south]{$pq_1$};
\draw [thick] (6.5,2) .. controls (6.6,1.4) and (7,0.7) .. (9,0);
\filldraw [black] (0,0) circle (0.8 pt);
\filldraw [black] (9,0) circle (0.8 pt);
\filldraw [black] (4.5,2) circle (0.8 pt);
\filldraw [black] (2.5,2) circle (0.8 pt);
\filldraw [black] (6.5,2) circle (0.8 pt);
\filldraw [black] (3,0) circle (0.8 pt);
\filldraw [black] (6,0) circle (0.8 pt);
\end{tikzpicture}
\caption{$k = 2$.}
\label{le4-pic6}
\end{figure}

\smallskip

{\bf Case II.} Let $k = 3$. That is, $x^{-1} y = c_1 c_2 c_3$.

In this case we obtain $p = (p_1 p_2) (c_1 c_2 c_3) (q_2^{-1} q_1^{-1})$, which is possible only if $p_2 c_1 \in P$ and $c_3 q_2^{-1} \in P$. Moreover, if either $p_1 (p_2 c_1) \notin P$, or $(c_3 q_2^{-1}) q_1^{-1} \notin P$, then we obtain a contradiction since $p$ can be represented by a reduced $P$-product of length more than one. Hence, $p_1 (p_2 c_1) \in P$ and $(c_3 q_2^{-1}) q_1^{-1} \in P$, and from the axiom (P6) it follows that $p_2 c_1, c_3 q_2^{-1} \in B_P$.
In this case any geodesic triangles $\Delta(p_1, x, x c_1)$ and $\Delta(p q_1, y, y c_3^{-1})$ have bounded diameters. Without loss of generality we can assume that $p_2 c_1 = c_3 q_2^{-1} = 1$. Hence, $p = p_1 c_2 q_1^{-1}$ and the product on the right is not reduced.

\smallskip

{\bf (a)} Assume that $p_1^{-1} p \notin P$. Then from the axiom (H2) it follows that $(p_1 \cdot p)_1 \leqslant C_2$ and we have
$$C_2 \geqslant (p_1 \cdot p)_1 \geqslant (p_1 \cdot x)_1 = (p_1 \cdot p_1 p_2)_1$$
$$= \frac{1}{2} (d(1, p_1) + d(1, p_1 p_2) - d(p_1, p_1 p_2))$$
$$= \frac{1}{2} (|p_1| + d(1, p_1 p_2) - |p_2|) \geqslant \frac{1}{2} (|p_1| + d(1, x) - C_3).$$
That is,
$$|p_1| + d(1, x) \leqslant 2C_2 + C_3,$$
which means that both $|p_1|$ and $d(1, x)$ are bounded. Hence, any geodesic triangle $\Delta(1, p_1, p_1 p_2)$ has a bounded diameter.

Since $p = p_1 c_2 q_1^{-1}$ and $p_1^{-1} p \notin P$, it follows that $p_1^{-1} p = c_2 q_1^{-1}$ and both products are reduced. But then, $p_1^{-1} a = c_2$ for some $a \in B_P$, and it follows that $|c_2| \leqslant |p_1| + |a| \leqslant (2C_2 + C_3) + C_0$ and the quadrilateral 
$$[x, x c_1] \cup [x c_1, x c_1 c_2] \cup [x c_1 c_2, y] \cup \gamma(x, y)$$
has a bounded diameter. Hence, the Hausdorff distance between $\gamma(x, y)$ and $\mathcal{C} = [x, x c_1] \cup [x c_1, x c_1 c_2] \cup [x c_1 c_2, y]$ is bounded.

\smallskip

{\bf (b)} If $p q_1 \notin P$, then an argument similar to {\bf (a)} can be applied.

\smallskip

{\bf (c)} Suppose both $p_1^{-1} p \in P$ and $p q_1 \in P$.

Let $t \in \gamma(x, y)$. Since the geodesic triangle $\Delta(x, p_1, p)$ is $C_1$-thin and $d(x, p_1) \leqslant C_3$, there exists $s \in [p_1, p]$ such that $d(s, t) \leqslant C_1 + C_3$. 

If $d(p_1, s) \leqslant (p \cdot p_1 c_2)_{p_1}$, then $s$ is $C_1$ close to $[p_1, p_1 c_2] = [x c_1, x c_1 c_2]$. 

Suppose $d(p_1, s) > (p \cdot p_1 c_2)_{p_1}$. Hence, $d(p, s) \leqslant (p_1 \cdot p_1 c_2)_p$ and there exists $s_1 \in [p, p_1 c_2]$ such that $d(s, s_1) \leqslant C_1$. It follows that $d(t, s_1) \leqslant d(t, s) + d(s, s_1) \leqslant 2C_1 + C_3$. At the same time, the triangle $\Delta(y, p, p_1 c_2)$ is $C_1$-thin, so there exists $t_1 \in [y, p_1 c_2] \cup [y, p]$ such that $d(s_1, t_1) \leqslant C_1$, and in this case $d(t, t_1) \leqslant d(t, s_1) + d(s_1, t_1) \leqslant 3C_1 + C_3$. Thus, $d(t, y) \leqslant 3C_1 + 2C_3$ and it follows that $d(t, p_1 c_2) \leqslant 3C_1 + 3C_3$.

Now take a point $z \in [x, x c_1] \cup [x c_1, x c_1 c_2] \cup [x c_1 c_2, y]$. If $z \in [x, x c_1] \cup [x c_1 c_2, y]$, then there exists a point on $\gamma(x, y)$ (for example, $x$ or $y$) at the distance not more than $C_3$ from $z$. Suppose, $z \in [x c_1, x c_1 c_2] = [p_1, p_1 c_2]$. Note that any geodesic triangle $\Delta(p_1, p_1 c_2, p)$ is $C_1$-thin (by (H1)), hence, there exists $z_1 \in [p_1, p] \cup [p_1 c_2, p]$ such that $d(z, z_1) \leqslant C_1$.

\begin{figure}[H]
\centering
\begin{tikzpicture}
\draw [thick] (0,0) node [anchor=east]{$1$} -- (8,0) node [anchor= west]{$p$};
\draw [thick] (2,2) -- (8,0) node [anchor=west]{$p$};
\draw [thick] (0,0) .. controls (1.4,0.7) and (1.8,1.4) .. (2,2) node [anchor=south]{$p_1 = x c_1$};
\draw [thick] (2,2) -- (6,2) node [anchor=south]{$p_1 c_2 = x c_1 c_2 = p q_1$};
\draw [thick] (2,2) -- (2,0) node [anchor=north]{$x$};
\draw [thick] (6,2) -- (6,0) node [anchor=north]{$y$};
\draw [thick,dashed] (4.4,1.2) node [anchor=south]{$s$} .. controls (4.5,0.6) .. (4.4,0) node [anchor= north]{$t$};
\draw [thick](6,2) .. controls (6.05,1.8) and (6.3,1.5) .. (6.5,1.2) node [anchor=west]{$s_1$};
\draw [thick] (6.5,1.2) .. controls (6.8,0.8) and (7.17,0.41) .. (8,0);
\draw [thick,dashed] (4.4,1.2) node [anchor=south]{$s$} .. controls (5.45,1.1) .. (6.5,1.2);
\draw [thick,dashed] (6.5,1.2) .. controls (6.42,0.6) .. (6.5,0) node [anchor=north]{$t_1$};
\filldraw [black] (4.4,1.2) circle (0.8 pt);
\filldraw [black] (0,0) circle (0.8 pt);
\filldraw [black] (8,0) circle (0.8 pt);
\filldraw [black] (2,2) circle (0.8 pt);
\filldraw [black] (6,2) circle (0.8 pt);
\filldraw [black] (2,0) circle (0.8 pt);
\filldraw [black] (6,0)  circle (0.8 pt);
\filldraw [black] (6.5,1.2) circle (0.8 pt);
\filldraw [black] (6.5,0) circle (0.8 pt);
\filldraw [black] (4.4,0) circle (0.8 pt);
\end{tikzpicture}
\caption{(c) $p_1^{-1} p \in P$ and $p q_1 \in P$.}
\label{le4-pic7}
\end{figure}
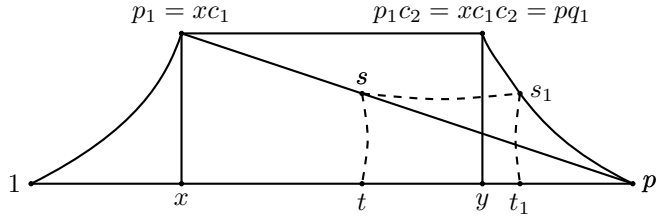

\smallskip

{\bf (i)} Let $z_1 \in [p_1, p]$. Since the triangle $\Delta(p_1, x, p)$ is $C_1$-thin, there exists $w_1 \in \gamma(x, p)$ such that $d(z_1, w_1) \leqslant C_1$. If $w_1 \in \gamma(x, y)$, then $d(z, w_1) \leqslant d(z, z_1) + d(z_1, w_1) \leqslant 2C_1$. Suppose $w_1 \in \gamma(y, p)$. Hence, $d(z_1, p) = d(w_1, p) \leqslant d(y, p)$ and there exists $y' \in [p_1, p]$ such that $d(y', p) = d(y, p) \geqslant d(z_1, p)$ and $d(y, y') \leqslant C_1$ (since $\Delta(p_1, p, x)$ is $C_1$-thin). But then it follows that there exists $y'' \in [p_1, p_1 c_2]$ such that $d(y', y'') \leqslant C_1$ (since $\Delta(p_1, p_1 c_2, p)$ is $C_1$-thin). Then, we obtain
$$d(p_1 c_2, y'') \leqslant d(p_1 c_2, y) + d(y, y') + d(y', y'') \leqslant C_3 + C_1 + C_1 = 2C_1 + C_3.$$
But $d(y', p_1) = d(y'', p_1) \leqslant d(z, p_1)$. Hence,
$$d(z, p_1 c_2) \leqslant d(y'', p_1 c_2) \leqslant 2C_1 + C_3$$
and we finally obtain $d(z, y) \leqslant 2C_1 + 2C_3$.

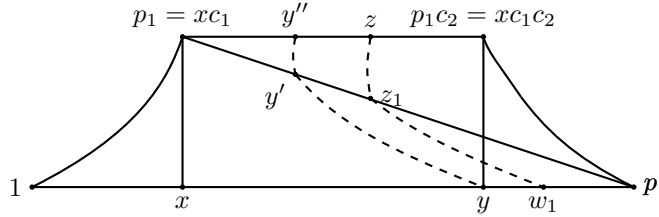
\begin{figure}[H]
\centering
\begin{tikzpicture}
\draw [thick] (0,0) node [anchor=east]{$1$} -- (8,0) node [anchor= west]{$p$};
\draw [thick] (2,2) -- (8,0) node [anchor=west]{$p$};
\draw [thick] (6,2) .. controls (6.05,1.8) and (6.3,1.5) .. (6.5,1.2);
\draw [thick] (0,0) .. controls (1.4,0.7) and (1.8,1.4) .. (2,2) node [anchor=south]{$p_1=xc_1$};
\draw [thick] (2,2) -- (6,2) node [anchor=south]{$p_1 c_2 = x c_1 c_2$};
\draw [thick] (2,2) -- (2,0) node [anchor=north]{$x$};
\draw [thick] (6,2) --(6,0) node [anchor=north]{$y$};
\draw [thick] (6.5,1.2) .. controls (6.8,0.8) and (7.17,0.41) .. (8,0);
\draw [thick,dashed] (3.5,2) .. controls (3.46,1.75) .. (3.5,1.5);
\draw [thick,dashed] (4.5,2) .. controls (4.44,1.59) .. (4.5,1.18);
\draw [thick,dashed] (3.5,1.5) .. controls (4,0.9) and (4.7,0.5) .. (6,0);
\draw [thick,dashed] (4.5,1.18) .. controls (4.9,0.85) and (5.5,0.5) .. (6.8,0);
\filldraw[black] (0,0) circle (0.8 pt);
\filldraw[black] (8,0) circle (0.8 pt);
\filldraw[black] (2,2) circle (0.8 pt);
\filldraw[black] (6,2) circle (0.8 pt);
\filldraw[black] (2,0) circle (0.8 pt);
\filldraw[black] (6,0) circle (0.8 pt);
\filldraw[black] (3.5,2) node [anchor=south]{$y''$} circle (0.8 pt);
\filldraw[black] (4.5,2) node [anchor=south]{$z$} circle (0.8 pt);
\filldraw[black] (3.5,1.5) node [anchor=north east]{$y'$} circle (0.8 pt);
\filldraw[black] (4.5,1.18) node [anchor=west]{$z_1$} circle (0.8 pt);
\filldraw[black] (6.8,0) node [anchor=north]{$w_1$} circle (0.8 pt);
\end{tikzpicture}
\caption{(i) $z_1 \in [p_1, p]$.}
\label{le4-pic8}
\end{figure}

\smallskip

{\bf (ii)} Let $z_1 \in [p_1 c_2, p]$. It follows that $d(z, p_1 c_2) \leqslant (p_1 \cdot p)_{p_1 c_2}$.

If $d(z, p_1 c_2) \leqslant (p_1 \cdot 1)_{p_1 c_2}$, then we can consider the triangle $\Delta(1, p_1 c_2, p_1)$, which is $C_1$-thin, and find a point on $[1, p_1 c_2]$ that is $C_1$-close to $z$. Repeating the argument from {\bf (i)} with the triangles $\Delta(1, p_1 c_2, p_1)$, $\Delta(1, y, p_1 c_2)$, and $\Delta(1, p_1, x)$, we obtain a bound on the distance from $z$ to $\gamma(x, y)$.

Suppose that $d(z, p_1 c_2) > (p_1 \cdot 1)_{p_1 c_2}$. Hence, $d(z, p_1) \leqslant (1 \cdot p_1 c_2)_{p_1}$ and there exists $r_1 \in [1, p_1]$ such that $d(z, r_1) \leqslant C_1$. The triangle $\Delta(1, p_1, x)$ is $C_1$-thin, hence, there exists $r_2 \in \gamma(1, x)$ such that $d(r, r_2) \leqslant C_1 + C_3$. The triangle $\Delta(y, p, p_1 c_2)$ is also $C_1$-thin, so there exists $z_2 \in \gamma(y, p)$ such that $d(z_1, z_2) \leqslant C_1 + C_3$. But then we obtain
$$d(x, y) \leqslant d(r_2, z_2) \leqslant 2(C_1 + C_3) + 2C_1,$$
which implies that
$$d(p_1, p_1 c_2) \leqslant |c_1| + d(x, y) + |c_3| \leqslant 2(C_1 + C_3) + 2C_1 + 2C_3 = 4(C_1 + C_3)$$
and the distance from $z$ to $\gamma(x, y)$ is bounded.

\begin{figure}[H]
\centering
\begin{tikzpicture}
\draw [thick] (0,0) node [anchor=east]{$1$} -- (8,0) node [anchor= west]{$p$};
\draw [thick] (2,2) -- (8,0) node [anchor=west]{$p$};
\draw [thick] (0,0) .. controls (1.4,0.7) and (1.8,1.4) .. (2,2) node [anchor=south]{$p_1 = x c_1$};
\draw [thick] (6,2) .. controls (6.05,1.8) and (6.3,1.5) .. (6.5,1.2);
\draw [thick] (6.5,1.2) .. controls (6.8,0.8) and (7.17,0.41) .. (8,0);
\draw [thick] (2,2) -- (6,2) node [anchor=south]{$p_1 c_2$};
\draw [thick] (2,2) -- (2,0) node [anchor=north]{$x$};
\draw [thick] (6,2) --(6,0) node [anchor=north]{$y$};
\draw [thick,dashed] (1.41,1) .. controls (2.5,1.3) and (3.5,1.7) .. (4,2);
\draw [thick,dashed] (6.5,1.2) .. controls (5.8,1.3) and (4.7,1.7) .. (4,2);
\draw [thick,dashed] (6.5,1.2) .. controls (6.41,0.6) .. (6.5,0);
\draw [thick,dashed] (1.41,1) .. controls (1.47,0.5) .. (1.41,0);
\filldraw[black] (0,0) circle (0.8 pt);
\filldraw[black] (8,0) circle (0.8 pt);
\filldraw[black] (2,2) circle (0.8 pt);
\filldraw[black] (6,2) circle (0.8 pt);
\filldraw[black] (2,0) circle (0.8 pt);
\filldraw[black] (6,0) circle (0.8 pt);
\filldraw[black] (6.5,1.2) circle (0.8 pt)node [anchor= west]{$z_1$};
\filldraw[black] (4,2) circle (0.8 pt) node [anchor=south]{$z$};
\filldraw[black] (1.41,1) circle (0.8 pt) node [anchor= east]{$r_1$};
\filldraw[black] (1.41,0) circle (0.8 pt) node [anchor= north]{$r_2$};
\filldraw[black] (6.5,0) circle (0.8 pt) node [anchor= north]{$z_2$};
\end{tikzpicture}
\caption{(ii) $z_1 \in [p_1 c_2, p]$.}
\label{le4-pic9}
\end{figure}
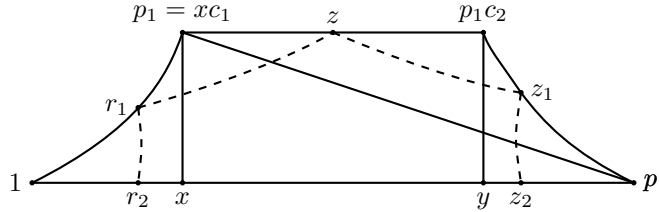

\smallskip

{\bf Case III.} Let $k = 4$. That is, $x^{-1} y = c_1 c_2 c_3 c_4$.

Similarly to {\bf Case II}, we obtain $p = (p_1 p_2) (c_1 c_2 c_3 c_4) (q_2^{-1} q_1^{-1})$, which is possible only if $p_2 c_1 \in B_P$ and $c_3 q_2^{-1} \in B_P$, and without loss of generality we assume that $p_2 c_1 = c_4 q_2^{-1} = 1$. Hence, $p = p_1 (c_2 c_3) q_1^{-1}$.

If $p_1 c_2 \notin P$ and $c_3 q_1^{-1} \notin P$, then the product $p_1 c_2 c_3 q_1^{-1}$ is reduced and it cannot be equal to a single element of $P$, which is a contradiction. Suppose $p_1 c_2 \in P$. 

If $c_3 q_1^{-1} \notin P$, then $(p_1 c_2) c_3$ must be in $P$ because otherwise $(p_1 c_2) c_3 q_1^{-1}$ is reduced and we obtain a contradiction. So from the axiom (P6), it follows that $p_1 c_2 \in B_P$. Next, the product $(p_1 c_2 c_3) q_1^{-1}$ cannot be reduced because it is equal to a single element of $P$, hence, $(p_1 c_2 c_3) q_1^{-1} \in P$ and again from (P6) it follows that $(p_1 c_2) c_3 \in B_P$. But we already have that $p_1 c_2 \in B_P$, so it implies that $c_3 \in B_P$ since $B_P$ s a subgroup of $P$ (and $G$). This is a contradiction with the fact that the product $c_1 c_2 c_3 c_4$ is reduced.

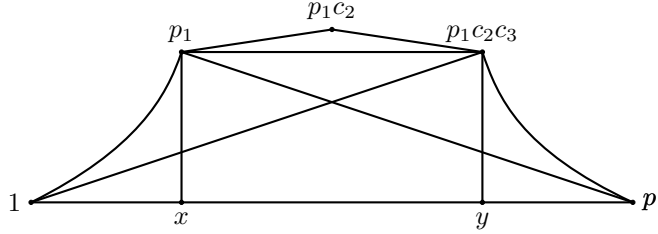
\begin{figure}[H]
\centering
\begin{tikzpicture}
\draw [thick] (0,0) node [anchor=east]{$1$} -- (8,0) node [anchor= west]{$p$};
\draw [thick] (2,2) -- (8,0) node [anchor=west]{$p$};
\draw [thick] (0,0) -- (6,2);
\draw [thick] (2,2) -- (6,2);
\draw [thick] (0,0) .. controls (1.4,0.7) and (1.8,1.4) .. (2,2) node [anchor=south]{$p_1$};
\draw [thick] (2,2) -- (4,2.3) node [anchor=south]{$p_1 c_2$};
\draw [thick] (4,2.3) -- (6,2) node [anchor=south]{$p_1 c_2 c_3$};
\draw [thick] (2,2) -- (2,0) node [anchor=north]{$x$};
\draw [thick] (6,2) -- (6,0) node [anchor=north]{$y$};
\draw [thick](6,2) .. controls (6.2,1.4) and (6.5,0.7) .. (8,0);
\filldraw [black] (4,2.3) circle (0.8 pt);
\filldraw [black] (0,0) circle (0.8 pt);
\filldraw [black] (8,0) circle (0.8 pt);
\filldraw [black] (2,2) circle (0.8 pt);
\filldraw [black] (6,2) circle (0.8 pt);
\filldraw [black] (2,0) circle (0.8 pt);
\filldraw [black] (6,0)  circle (0.8 pt);
\end{tikzpicture}
\caption{Case III. $x^{-1} y = c_1 c_2 c_3 c_4$.}
\label{le4-pic10}
\end{figure}

Thus, we have $p_1 c_2 \in P$ and $c_3 q_1^{-1} \in P$. Note that if $(p_1 c_2) c_3 \notin P$, then from $(p_1 c_2) c_3 q_1^{-1} \in P$ and the axiom (P6) it follows that $c_3 q_1^{-1} \in B_P$. If $(p_1 c_2) c_3 \in P$, then again from (P6) it follows that $p_1 c_2 \in B_P$. That is, $p = (p_1 c_2) (c_3 q_1^{-1})$ and either $p_1 c_2 \in B_P$, or $c_3 q_1^{-1} \in B_P$. Without loss of generality assume that $p_1 c_2 \in B_P$.

\smallskip

{\bf (a)} Let $t \in [p_1, p_1 c_2] \cup [p_1 c_2, p_1 c_2 c_3]$. 

Consider the geodesic triangle $\Delta(p_1, p_1 c_2, p_1 c_2 c_3)$, which is $C_1$-thin by the axiom (H1). Since $c_2 c_3$ is a reduced $P$-product, it follows that $(p_1 \cdot p_1 c_2 c_3)_{p_1 c_2} \leqslant C_2$ by the axiom (H2) and there exists $s \in [p_1, p_1 c_2 c_3]$ such that $d(t, s) \leqslant C_1 + C_2$. Next, the triangle $\Delta(p_1, p, p_1 c_2 c_3)$ is also $C_1$-thin, so there exists $v_1 \in [p_1, p] \cup [p_1 c_2 c_3, p]$ such that $d(s, v_1) \leqslant C_1$. If $v_1 \in [p_1, p]$, then since the triangle $\Delta(x, p_1, p)$ is $C_1$-thin too, there exists $v_2 \in \gamma(x, p)$ such that $d(v_1, v_2) \leqslant C_1$. If $v_1 \in [p_1 c_2 c_3, p]$, then since the triangle $\Delta(y, p_1 c_2 c_3, p)$ is $C_1$-thin too and $d(y, p_1 c_2 c_3) \leqslant C_3$, there exists $v_2 \in \gamma(y, p)$ such that $d(v_1, v_2) \leqslant C_1 + C_3$. In any case, $v_2 \in \gamma(x, p)$ and
$$d(t, v_2) \leqslant d(t, s) + d(s, v_1) + d(v_1, v_2) \leqslant (C_1 + C_2) + C_1 + (C_1 + C_3)$$
$$= 3C_1 + C_2 + C_3.$$
If $v_2 \in \gamma(x, y)$, then we are done. Assume that $v_2 \in \gamma(y, p)$.

Using a similar argument, symmetrically considering the triangles $\Delta(1, p_1, p_1 c_2 c_3)$, $\Delta(1, y, p_1 c_2 c_3)$, and $\Delta(1, p_1, x)$, all of which are $C_1$-thin, we can find a point $w_2 \in \gamma(1, y)$ such that 
$$d(t, w_2) \leqslant d(t, s) + d(s, w_2) \leqslant 3C_1 + C_2 + C_3.$$
Again, if $w_2 \in \gamma(x, y)$, then we are done. Assuming that $w_2 \in \gamma(1, x)$ we obtain
$$d(x, y) \leqslant d(w_2, v_2) \leqslant d(t, w_2) + d(t, v_2)$$
$$\leqslant (3C_1 + C_2 + C_3) + (3C_1 + C_2 + C_3) = 6C_1 + 2C_2 + 2C_3,$$
which implies that
$$d(p_1, p_1 c_2 c_3) \leqslant d(p_1, x) + d(x, y) + d(y + p_1 c_2 c_3)$$
$$\leqslant C_3 + (6C_1 + 2C_2 + 2C_3) + C_3 = 6C_1 + 2C_2 + 4C_3.$$
Finally, since $c_2 c_3$ is a reduced $P$-product, from the axiom (H2) we obtain
$$2C_2 \geqslant 2(p_1 \cdot p_1 c_2 c_3)_{p_1 c_2} = d(p_1, p_1 c_2) + d(p_1 c_2, p_1 c_2 c_3) - d(p_1, p_1 c_2 c_3)$$
$$\geqslant d(p_1, p_1 c_2) + d(p_1 c_2, p_1 c_2 c_3) - (6C_1 + 2C_2 + 4C_3),$$
hence,
$$|c_2| + |c_3| = d(p_1, p_1 c_2) + d(p_1 c_2, p_1 c_2 c_3) \leqslant 6C_1 + 4C_2 + 4C_3,$$
that is, both $c_2$ and $c_3$ have bounded lengths, and
$$d(t, x) \leqslant d(t, p_1) + d(p_1, x) \leqslant |c_2| + |c_3| + d(p_1, x)$$
$$\leqslant (6C_1 + 4C_2 + 4C_3) + C_3 = 6C_1 + 4C_2 + 5C_3.$$

\smallskip

{\bf (b)} Let $z \in \gamma(x, y)$.

The triangle $\Delta(1, y, p_1 c_2 c_3)$ is $C_1$-thin and $d(y, p_1 c_2 c_3) \leqslant C_3$, hence, there exists $w \in [1, p_1 c_2 c_3]$ such that $d(z, w) \leqslant C_1$ (in case when $w \in [y, p_1 c_2 c_3]$ we immediately obtain $d(z, p_1 c_2 c_3) \leqslant C_1 + C_3$ and we are done). Next, the triangle $\Delta(1, p_1, p_1 c_2 c_3)$ is also $C_1$-thin, so there exists $r \in [1, p_1] \cup [p_1, p_1 c_2 c_3]$ such that $d(w, r) \leqslant C_1$. If $r \in [p_1, p_1 c_2 c_3]$, then $r$ is $(C_1 + C_2)$-close to $\Delta(p_1, p_1 c_2, p_1 c_2 c_3)$, and we can find a point $u \in [p_1, p_1 c_2] \cup [p_1 c_2, p_1 c_2 c_3]$ such that $d(r, u) \leqslant C_1 + C_3$, which implies that
$$d(z, u) \leqslant d(z, w) + d(w, r) + d(r, u) \leqslant C_1 + C_1 + (C_1 + C_2) = 3C_1 + C_2.$$
If $r \in [1, p_1]$, then, since the triangle $\Delta(1, p_1, x)$ is $C_1$-thin and $d(x, p_1) \leqslant C_3$, it follows that there exists $r_1 \in \gamma(1, x)$ such that $d(r, r_1) \leqslant C_1 + C_3$. Hence, we have
$$d(x, z) \leqslant d(r_1, z) \leqslant d(r_1, r) + d(r, w) + d(w, z) \leqslant (C_1 + C_3) + C_1 + C_1 = 3C_1 + C_3.$$

\smallskip

{\bf Case IV.} If $k > 4$, then we have $p = (p_1 p_2) (c_1 \cdots c_k) (q_2^{-1} q_1^{-1})$ and assuming $p_2 c_1 = c_k q_2^{-1} = 1$ we obtain
$$p = p_1 (c_2 \cdots c_{k-1}) q_1^{-1},$$
$c_2 \cdots c_{k-1}$ is a reduced $P$-product of length at least three. In this case the product $p_1 (c_2 \cdots c_{k-1}) q_1^{-1}$ cannot be reduced to a single element of $P$ and we obtain a contradiction.

\end{proof}

\subsection{Quasi-geodesics defined by $P$-reduced products}
\label{subsec:hyp-crit}

Our proof of Theorem \ref{th:main} is going to rely on the following criterion for hyperbolicity (see \cite{Bowditch:2006, Hamenstadt:2007}).

\begin{theorem}\cite[Proposition 3.5]{Hamenstadt:2007}
\label{th:criterion}
Let $(X, d)$ be a geodesic metric space. Assume that there is a number $D > 0$ and for every pair of points $x, y \in X$ there is an path $\eta(x, y) : [0, 1] \to X$ connecting $\eta(x, y)(0) = x$ to $\eta(x, y)(1) = y$ so that the following conditions are satisfied.
\begin{enumerate}
\item[(a)] If $d(x, y) \leqslant 1$, then the diameter of $\eta(x, y)$ is at most $D$. 
\item[(b)] For $x, y \in X$ and $0 \leqslant s \leqslant t \leqslant 1$, the Hausdorff distance between $\eta(x, y)[s, t]$ and $\eta(\eta(x, y)(s), \eta(x, y)(t))[0, 1]$ is at most $D$.
\item[(c)] For any $x, y, z \in X$, the set $\eta(x, y)$ is contained in the $D$-neighborhood of $\eta(x, z)[0, 1] \cup \eta(z, y)[0, 1]$.
\end{enumerate}
Then $(X, d)$ is $\delta$-hyperbolic for a number $\delta > 0$ only depending on $D$.
\end{theorem}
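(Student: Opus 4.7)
The plan is to deduce $\delta$-hyperbolicity of $(X, d)$ by showing that the $\eta$-paths behave as uniform quasi-geodesics and then using condition (c) to transfer thinness from $\eta$-triangles to genuine geodesic triangles, with all constants depending only on $D$.

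First, I would establish that $\mathrm{diam}(\eta(x,y)) \leqslant K \cdot d(x,y) + K$ for a constant $K = K(D)$. Fix a geodesic $\gamma$ from $x$ to $y$ and choose subdivision points $x = x_0, x_1, \ldots, x_n = y$ on $\gamma$ with $d(x_{i-1}, x_i) \leqslant 1$ and $n \leqslant d(x,y) + 1$. Each short path $\eta(x_{i-1}, x_i)$ has diameter at most $D$ by (a). Iterated application of (c), combined with (b) to keep track of how subpaths of $\eta(x, y)$ are matched against the short $\eta(x_{i-1}, x_i)$, shows that $\eta(x, y)$ lies in a $K$-neighborhood of $\bigcup_i \eta(x_{i-1}, x_i)$, yielding the linear diameter bound.

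Next, I would show that geodesics track $\eta$-paths up to bounded error. The easy direction, that a geodesic $[x,y]$ is close to $\eta(x, y)$, follows from the previous step. The reverse direction, that every point of $\eta(x,y)$ lies within $O(D)$ of $[x, y]$, is the subtler point: for $p \in [x, y]$, apply (c) to the triple $(x, p, y)$ to obtain $\eta(x, y) \subseteq N_D(\eta(x, p) \cup \eta(p, y))$. Since both $\eta(x, p)$ and $\eta(p, y)$ pass through $p$, a continuity argument along the parameter $t \in [0,1]$ of $\eta(x,y)$ --- tracking when $\eta(x,y)(t)$ is closer to $\eta(x,p)$ versus $\eta(p,y)$ --- yields a ``switching'' parameter $t^\ast$ at which $\eta(x,y)(t^\ast)$ is simultaneously close to both paths, hence to their common endpoint $p$. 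Coupled with the previous step, this gives a bounded Hausdorff distance between $[x, y]$ and $\eta(x, y)$.

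Having the tracking between $[x,y]$ and $\eta(x,y)$ established, slim triangles follow quickly. For a geodesic triangle with vertices $x, y, z$, each side is Hausdorff-close to the matching $\eta$-path, while condition (c) gives $\eta(x,y) \subseteq N_D(\eta(x,z) \cup \eta(z,y))$. Composing the bounds yields a slim-triangle constant $\delta = \delta(D)$ for geodesic triangles in $X$, which is the standard Rips definition of hyperbolicity.

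The main obstacle is the tracking step, since it is essentially a Morse-lemma-type statement usually deduced \emph{from} hyperbolicity rather than used to establish it. Making the switching argument rigorous --- in particular, ensuring that the ``crossover'' point on $\eta(x, y)$ is genuinely close to $p$ and not merely close to either $\eta(x,p)$ or $\eta(p,y)$ separately --- is the technical heart of the proof and where the interaction of all three axioms must be exploited carefully.
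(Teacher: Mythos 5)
You are proving a statement that the paper itself does not prove: Theorem \ref{th:criterion} is quoted from Hamenst\"adt \cite[Proposition 3.5]{Hamenstadt:2007} (closely related to Bowditch's ``guessing geodesics'' criterion in \cite{Bowditch:2006}), so your attempt has to be measured against the literature proof, and as it stands it has a genuine gap exactly at the step you yourself flag as the heart of the matter. First, the preliminary step is overstated: applying (c) iteratively across a subdivision $x=x_0,x_1,\dots,x_n=y$ gives $\eta(x,y)\subseteq N_{nD}\bigl(\bigcup_i \eta(x_{i-1},x_i)\bigr)$ (or $N_{D\log_2 n}$ with a halving scheme), not a $K(D)$-neighborhood with $K$ independent of $n$; this is enough for a diameter bound that is linear in $d(x,y)$, but a linear bound is far from the uniform tracking you then claim as the ``easy direction''. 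Second, and more seriously, the switching argument is invalid: if $\eta(x,y)(t^\ast)$ lies within $D$ of $\eta(x,p)$ and within $D$ of $\eta(p,y)$, the two nearby points may sit anywhere on those paths, and nothing prevents $\eta(x,p)$ and $\eta(p,y)$ from fellow-travelling far from their common endpoint $p$; excluding that is precisely the kind of thinness statement the theorem is supposed to deliver, so the argument is circular. (You also conflate the two inclusions: producing, for each $p\in[x,y]$, a point of $\eta(x,y)$ near $p$ would prove $[x,y]\subseteq N(\eta(x,y))$, not that every point of $\eta(x,y)$ is near $[x,y]$.)

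The known proofs avoid this Morse-type shortcut altogether. Bowditch uses (a) and the thinness condition (c) in a divide-and-conquer filling: a loop of length $n$ is spanned by $\eta$-paths between division points, producing a disc decomposition of combinatorial area $O(n\log n)$; hyperbolicity then follows from the theorem that a subquadratic isoperimetric function forces a linear one. Hamenst\"adt's argument for the path version (where (b) is used to control subpaths) likewise establishes hyperbolicity first and only afterwards concludes that the $\eta$-paths are uniform quasi-geodesics close to geodesics. If you want to keep your outline, you must replace the switching step by an argument of this kind (isoperimetric, or an exponential-divergence criterion); as written, the central tracking claim is assumed rather than proved.
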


Note that $\Gamma(G, S)$ is geodesic metric space. If $x, y \in \Gamma(G, S)$, then $g = x^{-2} y \in G$ and there exists a reduced $P$-product $p = p_1 p_2 \cdots p_n$ representing $g$. This product defines the path 
$$\mathcal{P}_g = [x, x p(1)] \cup [x p(1), x p(2)] \cup \cdots \cup [x p(n-1), x p(n)]$$
from $x$ to $y$. Such a path is not unique since there can be many reduced $P$-products representing $g$, as well as many geodesics connecting adjacent points in the sequence $1, p(1), p(2), \ldots, p(n)$. Following the notation introduced in Theorem \ref{th:criterion}, for every $x, y \in \Gamma(G, S)$ let $\eta(x, y)$ denote the collection of all paths $\mathcal{P}_g$ defined above for $g = x^{-1} y$. If $x = y$, then $\eta(x, y)$ contains only a trivial path equal to a single point.

Now we are ready to prove the main result of the paper.

\begin{proof}[Proof of Theorem \ref{th:main}]
In order to prove hyperbolicity of $\Gamma(G, S)$ we are going to verify all the conditions listed in the Theorem \ref{th:criterion}.

\smallskip

{\bf (a)} Let $x, y \in \Gamma(G, S)$ be such that $d(x, y) \leqslant 1$. If $d(x, y) = 0$, then $\eta(x, y)$ contains only a trivial path and its diameter is $0$. If $d(x, y) = 1$, then $x^{-1} y = g \in S \cup S^{-1}$ and from Lemma \ref{le:fin-diameter} it follows that there exists a constant $D_1 > 0$ such that the diameter of any $p_g \in \eta(x, y)$ is bounded by $D_1$ and the condition (a) of Theorem \ref{th:criterion} holds for $D = D_1$.

\smallskip

{\bf (b)} Let $x, y \in \Gamma(G, S)$, let $g = x^{-1} y$, and $\mathcal{P}_g \in \eta(x, y)$. The path $\mathcal{P}_g$ corresponds to some reduced $P$-product $p_1 p_2 \ldots p_n$.

Suppose $s$ and $t$ are points on $\mathcal{P}_g$. We can assume that $s \in \gamma_1 = [p(i-1), p(i)]$ and $t \in \gamma_2 = [p(j-1), p(j)]$ (without loss of generality we can assume that $i \leqslant j$).

Note that the element $r = s^{-1} t \in G$ can also be represented by a reduced $P$-product and we take the corresponding $\mathcal{P}_r \in \eta(s, t)$. 

At the same time, from the axiom (H3) it follows that the element 
$f = s^{-1} p(i)$ can be represented by a reduced $P$-product $f_1 f_2$, where $|f_1| \leqslant C_3$ and let $\mathcal{P}_f \in \eta(s, p(i))$. Similarly, $h = p(j-1)^{-1} t$ can be represented by a reduced $P$-product $h_1 h_2$, where $|h_2| \leqslant C_3$ and we take a path $\mathcal{P}_h \in \eta(p(j-1), t)$. 

Finally, the element $g_0 = p(i)^{-1} p(j-1)$ is represented by the reduced $P$-product $p_{i+1} \cdots p_{j-1}$. The path
$$\mathcal{P}_{g_0} = [p(i), p(i+1)] \cup [p(i+1), p(i+2)] \cup \cdots \cup [p(j-2), p(j-1)]$$
is a subpath of $\mathcal{P}_h$ and $\mathcal{P}_{g_0} \in \eta(p(i), p(j-1))$. Note that both $\mathcal{P}_r$ and the concatenation of paths $\mathcal{P}_f \mathcal{P}_{g_0} \mathcal{P}_h$ connect $s$ and $t$ in $\Gamma(G, S)$.

\begin{figure}[H]
\centering
\begin{tikzpicture}
\draw [thick,->] (0,0.5) node [anchor=north ]{$x$} -- (1,1) node [anchor=south ]{$p(1)$};
\draw [thick,->,dashed] (1,1) -- (3,1.5) node [anchor=north ]{$p(i-1)$};
\draw [thick,->] (3,1.5) -- (5,2) node [anchor=south ]{$p(i)$};
\draw [thick,->] (5,2) -- (5.7,2);
\draw [thick,->,dashed] (5.7,2) -- (6.5,2) node [anchor=north east ]{$\mathcal{P}_{g_0}$};
\draw [thick,->] (6.5,2) -- (7.5,2) node [anchor=south ]{$p(j-1)$};
\draw [thick,->] (7.5,2) -- (9.5,1.5) node [anchor=north ]{$p(j)$};
\draw [thick,->,dashed] (9.5,1.5) -- (11.5,1) node [anchor=south ]{$p(n-1)$};
\draw [thick,->] (11.5,1) -- (12.5,0.5) node [anchor=north ]{$y$};
\filldraw[black] (3.5,1.625) circle (1.5pt) node [anchor=south ]{$s$};
\filldraw[black] (9,1.625) circle (1.5pt) node [anchor=south ]{$t$};
\filldraw[black] (0,0.5) circle (1.5pt);
\filldraw[black] (12.5,0.5) circle (1.5pt);
\filldraw[black] (6,0.75) circle (0 pt) node [anchor=north ]{$\mathcal{P}_r$};
\draw [thick,->] (3.5,1.625) -- (4,1);
\draw [thick,->] (4,1) -- (5,0.8);
\draw [thick,->,dashed] (5,0.8) -- (7,0.7);
\draw [thick,->] (7,0.7) -- (8,0.8);
\draw [thick,->] (8,0.8) -- (9,1.625);
\draw [thick,->] (3.5,1.625) -- (4.2,1.5) node [anchor=north west ]{$\mathcal{P}_f$};
\draw [thick,->] (4.2,1.5) -- (5,2);
\draw [thick,->] (7.5,2) -- (8.2,1.5) node [anchor=north east ]{$\mathcal{P}_h$};
\draw [thick,->] (8.2,1.5) -- (9,1.625);
\end{tikzpicture}
\caption{Condition (b).}
\end{figure}
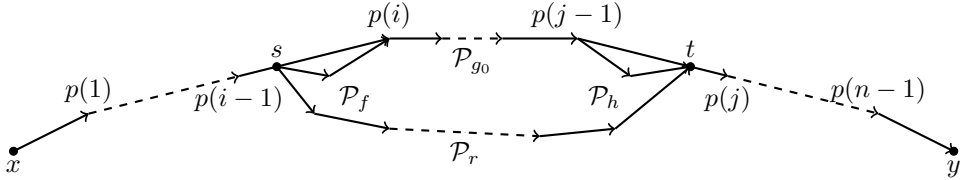

Our goal is to find a bound on the Hausdorff distance between $\gamma_1(s, p(i)) \cup \mathcal{P}_{g_0} \cup \gamma_2(p(j-1),t)$
and the path $\mathcal{P}_r$.

Note that the product $(f_1 f_2) (p_{i+1} \cdots p_{j-1})$ is not reduced, but there exists a reduced product $w = w_1 \cdots w_k$ representing $w = s^{-1} p(j-1)$ such that
$$(f_1 f_2) (p_{i+1} \cdots p_{j-1}) = w_1 \cdots w_k.$$
Let $\mathcal{P}_w \in \eta(s, p(j-1))$.

Now, the Hausdorff distance between $\gamma_1(s, p(i))$ and $\mathcal{P}_f$ is bounded by $C_1 + C_3$, which follows from the axioms (H1) and (H3). From Lemma \ref{le:thin-quads} it follows that the Hausdorff distance between $\mathcal{P}_f \cup \mathcal{P}_{g_0}$ and $\mathcal{P}_w$ is bounded by $D_4$. It follows that the Hausdorff distance between $\gamma_1(s, p(i)) \cup \mathcal{P}_{g_0}$ and  $\mathcal{P}_w$ is bounded by $D_4 + C_1 + C_3$. Similarly, the Hausdorff distance between $\gamma_2(p(j-1),t)$ and $\mathcal{P}_h$ is bounded by $C_1 + C_3$
and the Hausdorff distance between $\mathcal{P}_w \cup \mathcal{P}_h$ and $\mathcal{P}_r$ is bounded by $D_4$, which follows from Lemma \ref{le:thin-quads}. Hence, the Hausdorff distance between $\mathcal{P}_w \cup \gamma_2(p(j-1),t)$ and $\mathcal{P}_r$ is bounded by $D_4 + C_1 + C_3$. Finally, we obtain that the Hausdorff distance between $\gamma_1(s, p(i)) \cup \mathcal{P}_{g_0} \cup \gamma_2(p(j-1),t)$ and $\mathcal{P}_r$ is bounded by $2(D_4 + C_1 + C_3)$ and the condition (b) of Theorem \ref{th:criterion} holds for $D = 2(D_4 + C_1 + C_3)$.

\smallskip

{\bf (c)} Let $x, y, z \in \Gamma(G, S)$ and let $f = x^{-1} y,\ g = y^{-1} z,\ h = x^{-1} z$. If $\mathcal{P}_f \in \eta(x, y),\ \mathcal{P}_g \in \eta(y, z),\ \mathcal{P}_h \in \eta(x, z)$, then from Lemma \ref{le:thin-tiangles} it follows that there exists $D_3 > 0$ such that the triangle in $\Gamma(G, S)$ formed by the paths $\mathcal{P}_f, \mathcal{P}_g, \mathcal{P}_h$ is $D_3$-thin. In particular, it implies that $\mathcal{P}_f$ is contained in the
$D_3$-neighborhood of $\mathcal{P}_g \cup \mathcal{P}_h$. Hence,  the condition (c) of Theorem \ref{th:criterion} holds for $D = D_3$.

\smallskip

Since all the conditions of Theorem \ref{th:criterion} are satisfied for $D = \max\{D_1, 2(D_4 + C_1 + C_3), D_3\}$, all geodesic triangles in $\Gamma(G, S)$ are $\delta$-thin for some constant $\delta > 0$ and $\Gamma(G, S)$ is a $\delta$-hyperbolic space. Hence, $G \simeq U(P)$ is a word hyperbolic group.
\end{proof}

\end{document}